\newtheorem{theorem}{Theorem}[section]
\newtheorem{lemma}[theorem]{Lemma}
\newtheorem{sublemma}[theorem]{Sublemma}
\newtheorem{cor}[theorem]{Corollary}
\theoremstyle{definition}
\newtheorem{definition}[theorem]{Definition}
\theoremstyle{remark}
\newtheorem{remark}[theorem]{Remark}
\numberwithin{equation}{section}
\newcommand{\C}{\mathbb{C}}
\newcommand{\N}{\mathbb{N}}
\newcommand{\Z}{\mathbb{Z}}
\newcommand{\R}{\mathbb{R}}
\newcommand{\HH}{\mathbb{H}}
\newcommand{\OO}{\mathbb{O}}
\newcommand{\mft}{\mathfrak{t}}
\newcommand{\mfg}{\mathfrak{g}}
\newcommand{\mfk}{\mathfrak{k}}
\DeclareMathOperator{\diag}{diag}
\DeclareMathOperator{\rank}{rank}
\DeclareMathOperator{\id}{Id}
\DeclareMathOperator{\odd}{odd}
\DeclareMathOperator{\Spin}{Spin}
\DeclareMathOperator{\SU}{SU}
\DeclareMathOperator{\Sp}{Sp}
\DeclareMathOperator{\U}{U}
\title{Positively curved GKM-manifolds}
\author{Oliver Goertsches}
\address{Mathematisches Institut der Universit\"at M\"unchen \\ Theresienstrasse 39 \\ D-80333 M\"unchen\\ Germany}
\email{goertsches@math.lmu.de}
\thanks{}
\author{Michael Wiemeler}
\address{Institut f\"ur Mathematik\\ Universit\"at Augsburg\\ D-86135 Augsburg\\ Germany}
\email{michael.wiemeler@math.uni-augsburg.de}
\thanks{}
\subjclass[2010]{53C20, 55N91, 57S15}
\keywords{GKM manifolds, positive curvature, compact rank one symmetric spaces}
\begin{document}
\begin{abstract}
 Let \(T\) be a torus of dimension \(\geq k\) and \(M\) a \(T\)-manifold.
 \(M\) is a GKM$_k$-manifold if the action is equivariantly formal, has only isolated fixed points, and any $k$ weights of the isotropy representation in the fixed points are linearly independent. 

 In this paper we compute the cohomology rings with real and integer coefficients of GKM$_3$- and GKM$_4$-manifolds which admit invariant metrics of positive sectional curvature.
\end{abstract}

\maketitle


\section{Introduction}
\label{sec:introduction}

The classification of positively curved manifolds is a longstanding problem in Riemannian geometry.
So far only few examples of such manifolds are known.
In dimension greater than 24 all such examples are diffeomorphic to compact rank one symmetric spaces (CROSSs).
All examples admit non-trivial \(S^1\)-actions
and it is conjectured that all positively curved manifolds admit a non-trivial \(S^1\)-action.
Based on this conjecture several authors (see for example \cite{MR1255926}, \cite{MR2051400}, \cite{MR2139252}, \cite{amann13:_topol} and others) have considered the above classification problem under the extra assumption that there is a isometric circle or torus action.
Usually a lower bound on the  dimension of the acting torus is assumed which grows with the dimension of the manifold.

In this paper we study positively curved manifolds $M$ which admit an isometric torus action of GKM-type \cite{MR1489894}.
These actions have only finitely many fixed points (hence, $M$ is necessarily even-dimensional) and the union of all one-dimensional orbits is two-dimensional.
These assumptions on the strata of the action are stronger than what is usually assumed, but our methods work for three- and four-dimensional tori, independent of the dimension of $M$. In the appendix we observe that all known examples of positively curved manifolds in even dimensions admit isometric GKM actions. 

Our main result states that under a slightly more restrictive assumption on the strata we can determine the real cohomology ring of $M$:

\begin{theorem}\label{sec:main-results-8}
 Let $M$ be a compact connected positively curved orientable Riemannian manifold satisfying $H^{\odd}(M;\R)=0$. 
 \begin{enumerate}
 \item Assume that $M$ admits an isometric torus action of type GKM$_4$, i.e., an action with finitely many fixed points such that at each fixed point, any four weights of the isotropy representation are linearly independent. Then $M$ has the real cohomology ring of $S^{2n}$ or $\C P^n$.
 \item  Assume that $M$ admits an isometric torus action of type GKM$_3$, i.e., an action with finitely many fixed points such that at each fixed point, any three weights of the isotropy representation are linearly independent. 
 Then \(M\) has the real cohomology ring of a compact rank one symmetric space.
\end{enumerate}  
  In both cases, the real Pontryagin classes of \(M\) are standard,
  i.e. there exists an isomorphism of rings
  \(f:H^*(M;\mathbb{R})\rightarrow H^*(K;\mathbb{R})\) which preserves
  Pontryagin classes, where \(K\) is
  a compact rank one symmetric space.
\end{theorem}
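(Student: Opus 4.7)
The plan is to exploit the fact that for an equivariantly formal GKM action the torus-equivariant cohomology is determined combinatorially by the GKM graph $\Gamma$ (vertices = fixed points, edges = invariant $2$-spheres fixed by codimension-one subtori, labels = isotropy weights), and that the ordinary real cohomology is recovered as $H^*(M;\R)\cong H^*_T(M;\R)/(H^{>0}(BT;\R))$. Thus the problem reduces to classifying which GKM$_k$ graphs can be realized by a positively curved Riemannian manifold; once this is done, both the ring structure and the real Pontryagin classes can be read off the graph data.

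Geometric input from positive curvature enters through several standard tools. Every connected component of the fixed-point set of any subtorus $K\subset T$ is a totally geodesic, hence positively curved, submanifold of $M$. Frankel's theorem forces any two such components whose dimensions sum to at least $\dim M$ to intersect, which translates into a strong incidence condition on $\Gamma$. The GKM$_k$ hypothesis, in turn, implies that any fixed-point component of a subtorus of rank $\le k-1$ has real dimension at most $2(k-1)$; so the ``$2$-faces'' of $\Gamma$ (connected components of fixed sets of corank-$2$ subtori) are, in the GKM$_4$ case, positively curved equivariantly formal $T^2$-manifolds of dimension at most $6$ with few fixed points, and similarly small in the GKM$_3$ case. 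These are rigid enough to be classified by cohomology type.

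With these constraints in hand, I would analyze $\Gamma$ by studying one fixed point at a time. At a vertex $p$, the weights and the set of $2$-faces through $p$ determine the local combinatorics; positive curvature and the Frankel-type intersection conditions then propagate this local information across edges. In the GKM$_4$ case the analysis should force $\Gamma$ to be either the graph of $S^{2n}$ (two vertices joined by $n$ edges, with paired opposite weights) or the complete graph underlying $\C P^n$. In the GKM$_3$ case the additional possibilities corresponding to $\HH P^n$ and $\OO P^2$ must be admitted. Once $\Gamma$ is identified, the ring $H^*(M;\R)$ agrees with that of the corresponding CROSS, and the equivariant Pontryagin classes, which at each fixed point $p$ are $\prod_i(1+\alpha_i^2)$ over the weights $\alpha_i$ at $p$, map under the resulting isomorphism to the standard Pontryagin classes of the CROSS side.

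The main obstacle is the combinatorial classification step. Showing that positive curvature plus the purely algebraic GKM$_k$ condition forces only finitely many local models for the $2$-faces, and then that these local models assemble globally only into the GKM graphs of CROSSs, is the substantive work; the passage from $\Gamma$ to the cohomology ring and to the Pontryagin classes is then formal.
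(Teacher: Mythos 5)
Your overall framework --- reduce to classifying the possible GKM graphs, then recover $H^*(M;\R)\cong H^*_T(M;\R)/(H^{>0}(BT))$ and read the Pontryagin classes off the fixed-point data $\prod_i(1+\alpha_i^2)$ --- is exactly the strategy of the paper, but the proposal stops short of the actual proof: the classification of the graphs, which you yourself flag as ``the substantive work,'' is precisely the content of the theorem and is nowhere carried out. Moreover, the geometric tool you propose is not the right one. Frankel's theorem gives essentially nothing here, because the relevant totally geodesic submanifolds (the two-faces) are only $4$-dimensional, so their dimensions never sum to $\dim M$ once $\dim M$ is large; also your dimension count is off (for a corank-$2$ subtorus and a GKM$_3$-action the fixed components have dimension at most $4$, not $6$). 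What is actually used is the classification of positively curved $4$-manifolds with $T^2$-symmetry: each two-face of $\Gamma_M$ is the GKM graph of a $4$-dimensional totally geodesic invariant $T^2$-manifold, hence diffeomorphic to $S^4$ or $\C P^2$, so every two-face is a biangle or a triangle. From that starting point one still needs an intricate combinatorial analysis of the weights --- in the GKM$_4$ case, injectivity of the map sending a pair of edges at a vertex to the third edge of their two-face, forcing all faces to be of the same type; in the GKM$_3$ case, an analysis of the sign and permutation data attached to the relations $\gamma_i=\epsilon_{ij}\alpha_j+\delta_{ij}\beta_{\sigma_j(i)}$ showing the edge multiplicity $k$ is a power of two with $k\leq 4$, and that $k=4$ forces the graph to be a triangle --- to conclude that the graph is $\Gamma_{S^{2n}}$, $\Gamma_{\C P^n}$, $\Gamma_{\HH P^n}$ or $\Gamma_{\OO P^2}$. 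None of this is supplied, or even sketched, in the proposal.

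There is a second gap: identifying the underlying graph is not enough, because the GKM description of $H^*_T(M)$ --- and hence the quotient ring $H^*(M;\R)$ and the Pontryagin classes --- depends on the \emph{labelled} graph. One must show that the labeling forced on $\Gamma_M$ by the GKM$_3$ condition coincides with the labeling of an actual linear torus action on the corresponding CROSS; this is a genuine rigidity statement proved separately for $\C P^n$, $\HH P^n$ and $\OO P^2$ using $3$-independence (for instance, for $\HH P^n$ one must show the two weights joining $v_0$ to $v_i$ can be normalized so that their sum is independent of $i$). Your sentence ``Once $\Gamma$ is identified, the ring $H^*(M;\R)$ agrees with that of the corresponding CROSS'' silently assumes this label rigidity, so even granting the (unproven) graph classification, the cohomological conclusion does not yet follow.
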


As a corollary we get the following:

\begin{cor} 
\label{sec:main-results-9} 
    Let \(M\) be a compact connected positively curved orientable Riemannian manifold with $H^{\odd}(M;\R)=0$ which admits an isometric torus action of type GKM$_3$ and an invariant almost complex structure. 
 Then \(M\) has the real cohomology ring of a complex projective space.
\end{cor}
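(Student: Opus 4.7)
The plan is to apply Theorem~\ref{sec:main-results-8}(2) and then rule out every compact rank one symmetric space model of $M$ other than $\C P^n$ by combining Atiyah--Singer integrality of Hirzebruch numbers with an equivariant argument on the GKM graph. By that theorem, $M$ has the real cohomology ring and real Pontryagin classes of a CROSS $K$ with $H^{\odd}=0$, namely $K\in\{S^{2n},\C P^n,\HH P^n,\OO P^2\}$; it therefore remains to show that $K=\C P^n$ once a $T$-invariant almost complex structure is assumed to exist.

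If $K\neq\C P^n$ then $H^2(M;\R)=0$, hence the first Chern class of the invariant almost complex structure vanishes. The odd Chern classes of $M$ then vanish for degree reasons, and the even ones are uniquely determined by $c_1=0$ together with the Pontryagin classes of $K$ via the universal relations $\sum_{k}(-1)^kp_k=\prod(1-x_i)\prod(1+x_i)$, where $x_i$ are the Chern roots. Since $M$ is almost complex, Atiyah--Singer forces the Hirzebruch numbers $\chi^p(M)=\int_M\operatorname{td}(M)\cdot\operatorname{ch}(\Lambda^p T^*M)$ to be integers, and my plan is to check that in each case other than $K=S^6$ the formally determined Chern data produce a fractional value. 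For $K=S^{2n}$ with $n$ even the universal relation forces $p_{n/2}(M)=\pm4[M]^{*}\neq0$, contradicting the vanishing of $p_{n/2}$ on $S^{2n}$; for $K=S^{2n}$ with odd $n\geq 5$ one obtains $\chi^1(M)=-2/(n-1)!\notin\Z$; and the analogous computation for $K=\HH P^n$ with $n\geq 2$ and for $K=\OO P^2$ gives a fractional Hirzebruch number (for instance $\chi^1=-\tfrac12$ in the $\HH P^2$ case), recovering the classical non-existence of almost complex structures on those spaces from their Pontryagin data alone.

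The only remaining case is $K=S^6$, in which every Hirzebruch number happens to be an integer; here I would argue via the GKM graph. The graph has two vertices joined by three edges, and the $T$-invariance of the almost complex structure forces the three complex isotropy weights at the two fixed points to be $(\alpha_1,\alpha_2,\alpha_3)$ and $(-\alpha_1,-\alpha_2,-\alpha_3)$ respectively. Equivariant formality together with $H^2(M;\R)=0$ implies that the equivariant first Chern class $c_1^T\in H^2_T(M;\R)$ is the pullback along $M\to BT$ of a single linear form $\lambda\in\mft^{*}$, so evaluating $c_1^T$ at the two fixed points yields $\lambda=\alpha_1+\alpha_2+\alpha_3=-(\alpha_1+\alpha_2+\alpha_3)$ and hence $\alpha_1+\alpha_2+\alpha_3=0$. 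This directly contradicts the GKM$_3$ hypothesis that any three weights at a fixed point be linearly independent.

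The main technical difficulty will be the case-by-case index computation for $\HH P^n$ with large $n$ and for $\OO P^2$; my hope is that it reduces to verifying that the classical Hirzebruch non-existence obstructions on those CROSSs are already encoded in their Pontryagin classes, which Theorem~\ref{sec:main-results-8}(2) has matched on $M$. The $S^6$ exception, where the index obstruction vanishes, is handled cleanly by the GKM-theoretic argument above.
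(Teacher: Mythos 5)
Your route is genuinely different from the paper's: you invoke Theorem~\ref{sec:main-results-8}(2) to reduce to the four cohomological models and then try to exclude everything but $\C P^n$ by index-theoretic obstructions. Parts of this are fine: the sphere computations (for $n$ even the relation $p_{n/2}=\pm 2c_n=\pm4[M]^*$ against $p_{n/2}=0$, for odd $n\geq 5$ the value $\chi^1=-2/(n-1)!$) are correct, and your equivariant argument for $S^6$ --- equivariant formality plus $H^2(M;\R)=0$ forces $c_1^T$ to be pulled back from $BT$, so the fixed-point restrictions give $\alpha_1+\alpha_2+\alpha_3=-(\alpha_1+\alpha_2+\alpha_3)$, contradicting $3$-independence --- is correct, and in fact it disposes of \emph{all} spheres $S^{2n}$, $n\geq 2$, at once. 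The genuine gap is in the $\HH P^n$ ($n\geq 3$) and $\OO P^2$ cases: the non-integrality of some $\chi^p$ is announced as a ``plan'' and a ``hope'', not proved. You cannot simply quote the classical non-existence theorems (Hirzebruch, Borel--Hirzebruch, Massey) for these spaces, because $M$ is only known to carry the \emph{real} cohomology ring and \emph{real} Pontryagin classes of the model, not its integral cohomology or homeomorphism type, and the classical proofs (notably Massey's for $\HH P^n$) are not stated for, and do not obviously apply to, such a manifold; so you would have to verify, for every $n$, that the Chern data forced by $c_1=0$ and the standard rational Pontryagin classes violate Atiyah--Singer integrality. Your $\HH P^2$ value $\chi^1=-\tfrac12$ is right, but the general case is an unproved and nontrivial step. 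A second unaddressed point: a ring isomorphism over $\R$ does not normalize the fundamental class, so the characteristic numbers of $M$ are a priori only those of the model scaled by the unknown evaluation $\lambda$ of the top power of the preferred generator; you must first pin $\lambda=\pm1$ (e.g.\ via $\sigma(M)=\pm1$ and the signature theorem) before any integrality contradiction can be drawn. This is fixable, but it is missing.

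For comparison, the paper's proof (Lemma~\ref{sec:main-results-3}) is local and avoids both the full classification of Theorem~\ref{sec:main-results-8}(2) and all index theory: the invariant almost complex structure restricts to every two-dimensional face of $\Gamma_M$, i.e.\ to every four-dimensional fixed point component of a subtorus; a biangle face would be such a component with the rational cohomology of $S^4$, which is excluded either by the signed-weight bookkeeping or because $S^4$ admits no almost complex structure; then Lemma~\ref{sec:main-results-2} forces $\Gamma_M=\Gamma_{\C P^n}$ and Lemma~\ref{lem:weightsCP^n} identifies the labels, giving $H^*(M;\R)\cong H^*(\C P^n;\R)$. Note that your own equivariant $c_1^T$ argument, applied to these four-dimensional faces (where $H^2=0$ and there are two fixed points), yields exactly this exclusion of biangles; running it there instead of on the global models would let you bypass the entire index-theoretic case analysis and close the gap.
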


Under even stronger conditions on the weights of the isotropy representations (see Section~\ref{sec:integer-coefficients}) we can also prove versions of the above theorem for cohomology with integer coefficients.
By combining these versions with results from rational homotopy theory we can conclude that up to diffeomorphism there are only finitely many GKM manifolds which admit invariant metrics of positive sectional curvature and satisfy these stronger conditions, see Remark \ref{rem:rathomtop}.

We also have a version of Theorem~\ref{sec:main-results-8} for non-orientable manifolds (see Corollary~\ref{sec:non-orientable-gkm}).

This paper is organized as follows.
In Section~\ref{sec:gkm-theory} we review the basics of GKM theory and give the precise GKM condition.
In Section~\ref{sec:main-results-7} we give an outline of the proofs of the main results.
In Section \ref{sec:rankoneactions} we describe the GKM graph of natural torus actions on CROSSs.
In Section \ref{sec:main} we prove our main results.
In Sections \ref{sec:integer-coefficients} and~\ref{sec:non-oriented} we discuss extensions of our main results to cohomology with integer coefficients and to non-orientable manifolds.
In the appendix we describe the GKM graphs of natural torus actions on the known nonsymmetric examples of positively curved manifolds.\\

\noindent {\it Acknowledgements.} We are grateful to Augustin-Liviu Mare for some explanations on the octonionic flag manifold $F_4/\Spin(8)$, and the anonymous referees for careful reading and various improvements resulting from their comments.

\section{GKM theory}
\label{sec:gkm-theory}

Throughout this paper, we consider an action of a compact torus $T$ on a compact differentiable manifold $M$, which we will always assume to be connected. Then its equivariant cohomology is defined as 
\[
H^*_T(M;R)=H^*(M\times_T ET;R),
\]
where $R$ is the coefficient ring and $ET\to BT$ the classifying bundle of $T$. For the moment we will consider only the real numbers as coefficient ring and thus omit the coefficients from the notation, but we will also consider the case of the integers in Section \ref{sec:integer-coefficients} below. The equivariant cohomology $H^*_T(M)$ has, via the projection $M\times_T ET\to BT$, the natural structure of an $H^*(BT)=S(\mft^*)$-algebra.

We say that the $T$-action is \emph{equivariantly formal} if $H^*_T(M)$ is a free $H^*(BT)$-module. For such actions, the ordinary (de Rham) cohomology ring of $M$ can be computed from the equivariant cohomology algebra because of the following well-known statement \cite[Theorem 3.10.4  and Corollary 4.2.3]{MR1236839}: If the $T$-action on $M$ is equivariantly formal, then the natural map $H^*_T(M)\to H^*(M)$ is surjective and induces a ring isomorphism
\[
H^*_T(M)/(H^{>0}(BT))\cong H^*(M).
\]
For torus actions with only finitely many fixed points this condition is also equivalent to the fact that \(H^{\text{odd}}(M)=0\). In this case it follows that the number of fixed points of the torus action, which is equal to the Euler characteristic of $M$, is given by the total dimension of $H^*(M)$.

The Borel localization theorem \cite[Corollary 3.1.8]{MR1236839} implies that the canonical restriction map 
\[
H^*_T(M)\longrightarrow H^*_T(M^T)
\]
has as kernel the $H^*(BT)$-torsion submodule of $H^*_T(M)$; hence, for equivariantly formal actions, this map is injective, and one can try to compute $H^*_T(M)$ by understanding its image in $H^*_T(M^T)$. 

The Chang-Skjelbred Lemma \cite[Lemma 2.3]{MR0375357} describes this image in terms of the \emph{one-skeleton} 
\[
M_1=\{p\in M\mid \dim T\cdot p \leq 1\}.
\] of the action: if the $T$-action is equivariantly formal, then the sequence
\[
0\longrightarrow H^*_T(M) \longrightarrow H^*_T(M^T) \longrightarrow H^*_T(M_1,M^T)
\]
is exact, where the last arrow is the boundary operator of the long exact sequence of the pair $(M_1,M^T)$. Thus, the image of $H^*_T(M)\to H^*_T(M^T)$ is the same as the image of the restriction map $H^*_T(M_1)\to H^*_T(M^T)$. 

GKM theory, named after Goresky, Kottwitz and MacPherson \cite{MR1489894}, now poses additional conditions on the action in order to simplify the structure of the one-skeleton of the action. We assume first of all that the action has only finitely many fixed points. Then at each fixed point $p\in M$ the isotropy representation decomposes into its weight spaces 
\[
T_pM = \bigoplus_{i=1}^n V_i,
\]
corresponding to weights $\alpha_i:\mft\to \R$ which are well-defined up to multiplication by $-1$. 
\begin{definition}We say that the action is GKM$_k$, $k\geq 2$, if it is equivariantly formal, has only finitely many fixed points, and at each fixed point $p$ any $k$ weights of the isotropy representation are linearly independent. 
\end{definition}

For $k=2$ one obtains the usual GKM conditions. If $M$ is a GKM$_k$-manifold with respect to some action of a torus $T$ and $T'\subset T$ is a subtorus of codimension $< k$, then the condition on the weights implies that every component of its fixed point set $M^{T'}$ is of dimension at most $2k$. In particular, for $T'$ of codimension one, each component is either a point or a two-dimensional $T$-manifold with fixed points. Hence, it can only be either $S^2$ or $\R P^2$, with $\R P^2$ occurring only if $M$ is non-orientable.

Consider first the case that $M$ is orientable, i.e., that the one-skeleton $M_1$ is a union of two-spheres. Then the GKM graph $\Gamma_M$ of the action is by definition the graph with one vertex for each fixed point, and one edge connecting two vertices for each two-sphere in $M_1$ containing the corresponding fixed points. We label the edge with the isotropy weight of the associated two-sphere.

In the non-orientable case, the graph encodes the possible presence of $\R P^2$'s in the one-skeleton via edges that connect a vertex corresponding to the unique fixed point contained in the $\R P^2$ with an auxiliary vertex representing the exceptional orbit; in \cite{goertsches13:_non_gkm} these vertices are drawn as stars. However, in our situation of a torus action these edges  have no impact on the equivariant cohomology at all, so we could as well leave them out from the graph without losing any cohomological information.

Summarizing, one obtains the following description of the equivariant cohomology algebra of an action of type GKM (\cite[Theorem 7.2]{MR1489894}; the non-orientable case is an easy generalization, see \cite[Section 3]{goertsches13:_non_gkm}):
\begin{theorem} Consider an action of a torus $T$ on a compact manifold $M$ of type GKM$_2$, with fixed points $p_1,\ldots,p_n$. Then, via the natural restriction map
\[
H^*_T(M)\longrightarrow H^*_T(M^T) = \bigoplus_{i=1}^n S(\mft^*),
\] 
the equivariant cohomology algebra $H^*_T(M)$ is isomorphic to the set of tuples $(f_i)$, with the property that if the vertices $i$ and $j$ in the associated GKM graph are joined by an edge with label $\alpha\in \mft^*$, then $f_i|_{\ker \alpha}=f_j|_{\ker \alpha}$.
\end{theorem}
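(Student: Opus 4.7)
The plan is to combine the three general tools recalled immediately above the statement. Equivariant formality together with Borel localization yields that the restriction $H^*_T(M) \to H^*_T(M^T) = \bigoplus_{i=1}^n S(\mft^*)$ is an injective $S(\mft^*)$-algebra homomorphism, and by the Chang--Skjelbred lemma its image coincides with that of $H^*_T(M_1) \to H^*_T(M^T)$. The task therefore reduces to computing this latter image, and since all maps in sight respect the ring structure, the resulting identification will automatically be one of algebras.

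Under the GKM$_2$ condition, each positive-dimensional component of $M_1$ is an $S^2$ or (in the nonorientable case) an $\R P^2$, and distinct such components intersect only in $M^T$. I would apply an equivariant Mayer--Vietoris argument to the covering of $M_1$ by invariant open neighborhoods $U$ of $M^T$ on the one hand and $V$ of the union of the sphere components on the other. Each component $S$ is equivariantly formal, so $H^*_T(S)$ is concentrated in even degrees; the overlap $U\cap V$ is a disjoint union of equivariant punctured disks of isotropy type $\ker\alpha$, with equivariant cohomology $\bigoplus S(\mft^*)/(\alpha)$, also concentrated in even degrees. The Mayer--Vietoris sequence therefore breaks into short exact sequences, identifying $H^*_T(M_1)$ with the subring of $H^*_T(M^T) \oplus \bigoplus_S H^*_T(S)$ consisting of tuples whose restrictions to $M^T$ and to the $S^T$'s match; projecting to $H^*_T(M^T)$, the image becomes the set of tuples $(f_i)$ whose pair of values at the endpoints of each edge $S$ lies in the image of $H^*_T(S) \to H^*_T(S^T)$.

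It then remains to identify this per-edge image. If $S = S^2$ carries the weight $\alpha$, the $T$-action factors through the circle $T/\ker\alpha$, and a direct Mayer--Vietoris computation on $S^2$ (covered by two invariant disks around the poles) yields $H^*_T(S^2) \cong S(\mft^*)[x]/(x(x-\alpha))$, a free $S(\mft^*)$-module on $\{1,x\}$ with $x$ restricting to $(0,\alpha)$ at the two fixed points; hence the image in $S(\mft^*) \oplus S(\mft^*)$ is exactly $\{(f_i, f_j) : f_i - f_j \in (\alpha)\}$, equivalently pairs with $f_i|_{\ker\alpha} = f_j|_{\ker\alpha}$. For $S = \R P^2$ with its single fixed point, the restriction $H^*_T(\R P^2) \to S(\mft^*)$ is surjective over $\R$ (by equivariant formality of $\R P^2$ itself), so no constraint is imposed, consistent with dropping the auxiliary edges from $\Gamma_M$. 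Assembling these per-component facts gives the stated GKM description. The main obstacle is the Mayer--Vietoris reduction from $M_1$ to its individual components: one must verify that the glueing is sufficiently well-behaved, and that the connecting homomorphism vanishes thanks to the even-degree concentration of the equivariant cohomologies involved. The sphere computation itself is classical and nearly immediate once the two-disk cover is in hand.
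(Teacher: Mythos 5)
The paper does not actually prove this statement; it is quoted from Goresky--Kottwitz--MacPherson (Theorem 7.2 of their paper), with the non-orientable case delegated to the cited follow-up, so there is no internal proof to compare with. Your argument is essentially the standard proof underlying those citations: injectivity of the restriction to $M^T$ from equivariant formality plus Borel localization, the Chang--Skjelbred lemma to reduce to the one-skeleton, and a per-component computation in which an $S^2$ of weight $\alpha$ contributes exactly the congruence $f_i\equiv f_j \bmod \alpha$ (equivalently $f_i|_{\ker\alpha}=f_j|_{\ker\alpha}$) and an $\R P^2$ contributes no condition over $\R$; all of these ingredients are correct, and the identification is indeed one of $S(\mft^*)$-algebras since every map involved is a ring homomorphism. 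One point needs repair: the open cover as you describe it is not coherent, because every fixed point lies on the two-dimensional components, so an invariant neighborhood $V$ of the union of the sphere components of $M_1$ is all of $M_1$, and the overlap with $U$ would then not be a union of punctured disks. The standard fix is to take $U$ an invariant neighborhood of $M^T$ in $M_1$ (retracting to $M^T$) and $V=M_1\setminus M^T$, which retracts onto a disjoint union of one-dimensional orbits, one circle with isotropy $\ker\alpha_e$ for each edge $e$ (and likewise for the exceptional orbits of the $\R P^2$-components), so that $H^*_T(V;\R)\cong\bigoplus_e S(\mft^*)/(\alpha_e)$. Then everything in sight is concentrated in even degrees, the connecting homomorphisms vanish as you anticipated, and projecting the kernel of the difference map to $H^*_T(M^T)$ gives precisely the tuples $(f_i)$ with $f_i\equiv f_j\bmod\alpha_e$ at the endpoints of each edge; this version even bypasses the explicit computation of $H^*_T(S^2)$, which instead becomes relevant if one prefers your alternative of gluing in the two-spheres one at a time. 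With that adjustment your proof is complete and coincides with the argument given in the references the paper relies on.
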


If \(M\) is of class GKM$_3$, the \emph{two-skeleton} \(M_2=\{p\in M\mid \dim T\cdot p\leq 2\}\) of \(M\) is a union of four-dimensional $T$-invariant submanifolds. The $T$-action induces on each of these submanifolds an effective action of a two-dimensional torus. We call a subgraph of \(\Gamma_M\) which corresponds to the intersection of \(M_1\) with one of these four-dimensional manifolds a two-dimensional face of \(\Gamma_M\).
It is easy to see that for each pair \((e_1,e_2)\) of edges of \(\Gamma_M\) emanating from the same vertex \(v\) there is exactly one two-dimensional face of \(\Gamma_M\) which contains \(e_1\) and \(e_2\).

For a \(2m\)-dimensional GKM manifold \(M\) with fixed points \(p_1,\dots,p_n\) the restriction of the total equivariant (real) Pontryagin class \(p^T(M)\) of \(M\) to \(M^T\) is given by
\begin{equation}
\label{eq:7}
\sum_{i=1}^n\prod_{j=1}^m(1+\alpha_{ij}^2)\in \bigoplus_{i=1}^n S(\mft^*),
\end{equation}
where the \(\alpha_{ij}\) are the weights of the \(T\)-representation
\(T_{p_i}M\) at \(p_i\in M^T\)  (see for example \cite[Lemma
6.10]{kawakubo} and \cite[Corollary 15.5]{MR0440554}).
Since the restriction \(H^*_T(M)\rightarrow H^*_T(M^T)\) is injective, it follows that the equivariant Pontryagin classes of \(M\) are determined by the GKM graph of \(M\).

\section{Strategy of the proof}
\label{sec:main-results-7}
Let us indicate here the strategy of the proof of Theorem \ref{sec:main-results-8} and Corollary \ref{sec:main-results-9}. In Section \ref{sec:rankoneactions} we will determine all possible GKM graphs of torus actions on compact rank one symmetric spaces. Then in Section \ref{sec:main} we will show that under the given assumptions the GKM graph necessarily coincides with one of these, say of an action on a CROSS \(N\).
  Therefore by the GKM-description of equivariant cohomology we have an isomorphism of \(H^*(BT)\)-algebras \(H_T^*(M)\cong H_T^*(N)\).
  Since \(H^*(M)\cong H^*_T(M)/(H^{>0}(BT))\) and similarly for \(N\), it follows that \(H^*(M)\cong H^*(N)\).
The remark about the Pontryagin classes follows because \(p^T(M)\) is mapped to \(p(M)\) by the natural map \(H_T^*(M)\rightarrow H^*(M)\) and \(p^T(M)\) is determined by the GKM graph of \(M\) by (\ref{eq:7}).

\section{Torus actions on compact rank one symmetric spaces}
\label{sec:rankoneactions}
Let $M$ be an even-dimensional compact simply-connected symmetric space of rank one, i.e., either $S^{2n}$, $\C P^n$, $\HH P^n$ or $\OO P^2$. In the following we describe certain GKM actions on these manifolds and determine their GKM graphs, including their labeling with weights.

\subsection{The spheres}\label{subsec:spheres}

Let \(\alpha_i:T\rightarrow S^1\), \(i=1,\dots,n\) be characters of the torus \(T\).
Denote by \(V_{\alpha_i}\) the \(T\)-representation
\begin{align*}
  T\times \C &\rightarrow \C& (t,z)&\mapsto \alpha_i(t)z.
\end{align*}
Here and in the rest of Section \ref{sec:rankoneactions} we will denote the weight of this representation, i.e., the linear form $d\alpha_i\in {\mathfrak{t}}^*$, again by $\alpha_i$.

Assume that for any \(j_1,j_2\in\{1,\dots,n\}\), \(j_1\neq j_2\), \(\alpha_{j_1}\) and \(\alpha_{j_2}\) are linearly independent.
Then the restriction of the \(T\)-action to the unit sphere of \(V_{\alpha_1}\oplus\dots\oplus V_{\alpha_{n}}\oplus \R\) defines a GKM$_2$ action of \(T\) on \(S^{2n}\).
This action has \(2\) fixed points \(v_1,v_2\) corresponding to the two points in the intersection of the sphere with the \(\R\)-summand.
These two vertices are joined  in the GKM graph of \(S^{2n}\) by exactly \(n\) edges.
Moreover, the weights of these edges are given by the \(\pm \alpha_i\).

Thus, any labeling of $\Gamma_{S^{2n}}$ with pairwise linearly independent weights is realized by a GKM action on $S^{2n}$.

 \begin{figure}[htb]
 \includegraphics[width=150pt]{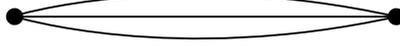}
 \caption{GKM graph of $S^6$}
 \label{figrp}
 \end{figure}

\subsection{The complex projective spaces}\label{subsec:CP^n}

Let \(\alpha_i:T\rightarrow S^1\), \(i=0,\dots,n\), be characters of the torus \(T\).
Denote by \(V_{\alpha_i}\) the \(T\)-representation
\begin{align*}
  T\times \C &\rightarrow \C& (t,z)&\mapsto \alpha_i(t)z.
\end{align*}
Assume that, if \(n>1\), for any pairwise distinct \(i,j_1,j_2\in\{0,\dots,n\}\),  \(\alpha_{j_1}-\alpha_i\) and \(\alpha_{j_2}-\alpha_i\) are linearly independent.
If \(n=1\) assume that \(\alpha_1\neq \alpha_0\).
Then the projectivization of \(V_{\alpha_0}\oplus\dots\oplus V_{\alpha_{n}}\) defines a GKM action of \(T\) on \(\C P^n\).
This action has \(n+1\) fixed points \(v_0,\dots,v_{n}\) corresponding to the weight spaces of the above \(T\)-representation.
The GKM graph of this action is a complete graph on these fixed points.
Moreover, the weight of the edge from \(v_i\) to \(v_j\) is given by \(\pm(\alpha_i-\alpha_j)\).

 \begin{figure}[htb]
 \includegraphics[width=150pt]{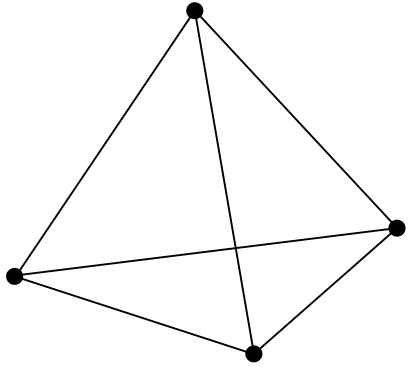}
 \caption{GKM graph of $\C P^3$}
 \label{figrp}
 \end{figure}
In particular, in this type of example an arbitrary set of non-zero pairwise linearly independent weights \(\gamma_1,\dots,\gamma_n\), such that, if for any pairwise distinct \(i,j_1,j_2\in\{1,\dots,n\}\),  \(\gamma_{j_1}-\gamma_i\) and \(\gamma_{j_2}-\gamma_i\) are linearly independent,    can occur as labels of the edges adjacent to $v_0$.

\subsection{The hyperbolic projective spaces}

Let \(\alpha_i:T\rightarrow S^1\), \(i=0,\dots,n\), be characters of the torus \(T\).
Denote by \(V_{\alpha_i}\) the quaternionic \(T\)-representation
\begin{align*}
  T\times \mathbb{H} &\rightarrow \mathbb{H}& (t,q)&\mapsto \alpha_i(t)q.
\end{align*}
Assume that, if \(n>1\), for any pairwise distinct \(i,j_1,j_2\in\{0,\dots,n\}\),  \(\alpha_{j_1}\pm\alpha_i\) and \(\alpha_{j_2}\pm\alpha_i\) are linearly independent.
If \(n=1\) assume that \(\alpha_0+\alpha_1\) and \(\alpha_0-\alpha_1\) are linearly independent.
Because the action of \(T\) on \(V_{\alpha_0}\oplus\dots\oplus V_{\alpha_{n}}=\mathbb{H}^{n+1}\) commutes with the action of \(\mathbb{H}^*\) given by multiplication from the right, we get a GKM action of \(T\) on \(\mathbb{H} P^n=(\mathbb{H}^{n+1}\setminus\{0\})/\mathbb{H}^*\).
This action has \(n+1\) fixed points \(v_0,\dots,v_{n}\) corresponding to the weight spaces of the above \(T\)-representation.
In the GKM graph of this action any two fixed points are joined by exactly two edges.
Moreover, the weights of the edges from \(v_i\) to \(v_j\) are given by \(\pm(\alpha_i-\alpha_j)\) and \(\pm(\alpha_i+\alpha_j)\).

 \begin{figure}[htb]
 \includegraphics[width=150pt]{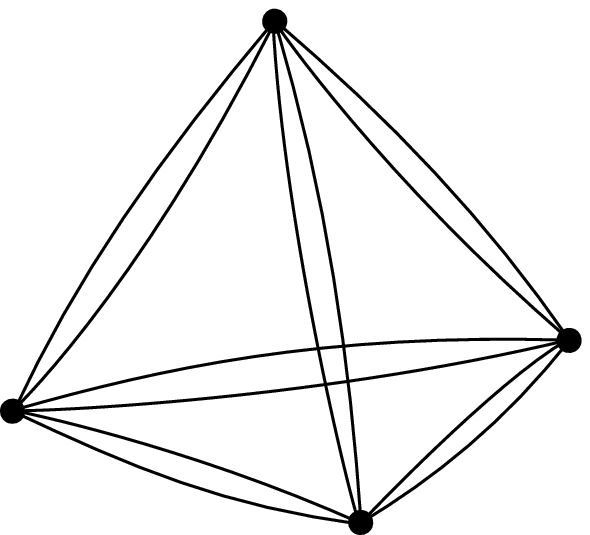}
 \caption{GKM graph of $\HH P^3$}
 \label{figrp}
 \end{figure}
 Note that in this case, the set of weights at the edges adjacent to, say, $v_0$, is not arbitrary. Indeed, the weights corresponding to the two edges connecting $v_0$ with $v_i$ add up (after choosing the right signs) to $2\alpha_0$, independent of $i$.

\subsection{The Cayley plane}

The Cayley plane $\OO P^2$ can be defined as the homogeneous space $F_4/\Spin(9)$. This is a homogeneous space of the form $G/K$ satisfying $\rank G = \rank K$. For a homogeneous space $G/K$ of this type, let $T\subset K$ be a maximal torus and consider the $T$-action on $G/K$ by left multiplication. It is known that this action is equivariantly formal and is of type GKM$_2$ \cite{GuilleminHolmZara}. The GKM graph of an action of this type can be described completely in terms of the root systems $\Delta_G$ and $\Delta_K$ of $G$ and $K$, see \cite[Theorem 2.4]{GuilleminHolmZara}: first of all, the set of fixed points of the action is given by the quotient of Weyl groups $W_G/W_K$. At the origin $eK$, the tangent space of $G/K$ is $K$-equivariantly isomorphic to the quotient $\mfg/\mfk$, which implies that the weights of the isotropy representation at $eK$ are given by those roots of $G$ which are not roots of $K$. Moreover, two vertices corresponding to elements $wW_K$ and $w'
 W_K$ of $W_G/W_K$ are on a common edge if and only if $wW_K =\sigma_\alpha  w' W_K$ for some $\alpha\in \Delta_G\setminus \Delta_K$, where $\sigma_\alpha$ denotes the reflection at $\alpha$.

\begin{remark} Of course, also the other compact rank one symmetric spaces are homogeneous spaces of this type, and we could have used the results of \cite{GuilleminHolmZara} to describe the GKM graphs of the torus actions on these spaces as well.
\end{remark}

Because $\dim H^*(\OO P^2) = 3$, the action of a maximal torus $T\subset \Spin(9)$ has exactly three fixed points, which correspond to three vertices in the associated GKM graph. Since $\dim \OO P^2=16$, from each vertex emerge eight edges. Because of the $W_{F_4}$-action on the graph which is transitive on the vertices, it follows that any two vertices are connected by four edges.

 \begin{figure}[htb]
 \includegraphics[width=150pt]{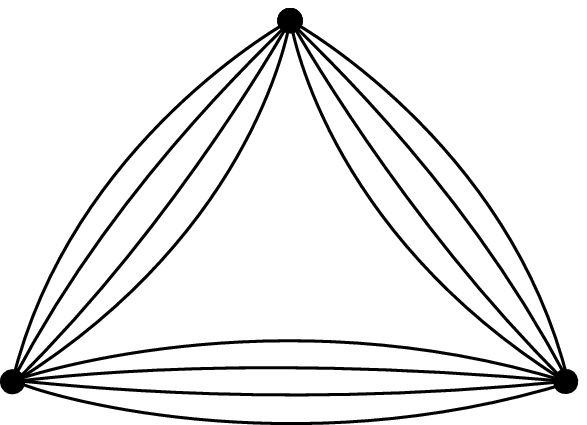}
 \caption{GKM graph of $\OO P^2$}
 \label{figrp}
 \end{figure}

The root system of $F_4$ is described explicitly in \cite[Proposition 2.87]{Knapp}: Consider $\R^4$, equipped with the standard inner product and standard basis \(e_1,\dots,e_4\),
\[
\Delta_{F_4}= \begin{cases} \pm e_i \\ \pm e_i\pm e_j & \text{ for }i\neq j\\ \frac12 (\pm e_1\pm e_2\pm e_3\pm e_4).\end{cases}
\]
The root system of $\Spin(9)$ is contained in $\Delta_{F_4}$ as the subroot system
\[
\Delta_{\Spin(9)} = \begin{cases}\pm e_i \\ \pm e_i\pm e_j & \text{ for }i\neq j,\end{cases}
\]
see \cite[p.\ 55]{Adams}.

Now we can determine the labeling of the GKM graph of \(\OO P^2\) from the fact that the labeling of the graph is invariant under the action of \(W_{F_4}\) on the graph. 

More precisely, if $\alpha$ and $\beta$ are weights in $\Delta_{F_4}\setminus \Delta_{\Spin(9)}$, then the reflection $\sigma_\alpha\beta$ of $\beta$ at $\alpha$ is $\pm \beta$ if and only if the number of minus signs in $\alpha$ and $\beta$ coincide modulo two, and if the number of signs is not congruent modulo two, then $\sigma_\alpha\beta$ is one of the roots $\pm e_i$. (See also the proof of Proposition 2.87 in \cite{Knapp} for a description of the action of the Weyl group of $F_4$ on $\Delta_{F_4}$.) This implies that if \(v_0,v_1,v_2\) are the vertices of \(\Gamma_{\OO P^2}\), then the weights at the edges between two of them, say \(v_0\) and \(v_1\), are given (up to sign) by
\begin{align*}
  \frac{1}{2}(-e_1 -e_i+ \sum_{j\neq i,1}e_j)\quad (i=2,3,4)\quad  \text{ and } \quad  \frac{1}{2}\sum_{j=1}^4e_j,
\end{align*}
the weights at the edges between \(v_0\) and \(v_2\) are given (up to sign) by
\begin{align*}
   \frac{1}{2}(-e_i+ \sum_{j\neq i}e_j)\qquad (i=1,\dots,4),
\end{align*}
and the weights at the edges between \(v_1\) and \(v_2\) are given by \(\pm e_1,\dots,\pm e_4\).

\begin{remark} The examples given in this section show that every simply connected compact symmetric space of rank one admits a torus action of type GKM$_3$. Note that in order to check whether an action of a torus $T\subset K$ on a homogeneous space $G/K$ with $\rank G = \rank K$ is GKM$_3$, we only need to check the $3$-independence of the weights in one of the fixed points: the $3$-independence in the other fixed points then follows from the $W(G)$-action on the GKM graph which is transitive on the vertices.

Moreover, one sees that there are actions on these spaces for which all weights appearing in the GKM graph of the action are primitive vectors in the weight lattice of \(T\). To see this just take the \(\alpha_i\) in the construction of the actions on \(S^{2n}\), \(\C P^n\) and \(\HH P^n\) to be primitive vectors and pairwise linear independent. For the action on \(\OO P^2\) one sees that the weights are primitive vectors in the weight lattice of a maximal torus of \(F_4\), because this lattice is generated by the roots of \(F_4\).
\end{remark}

\section{The GKM graphs of positively curved manifolds}
\label{sec:main}

In this section we determine the GKM graphs of positively curved GKM$_3$- and GKM$_4$-manifolds.
The key observation is the following lemma.

\begin{lemma}
\label{sec:main-results}
  Let \(M\) be an orientable GKM$_3$-manifold with an invariant metric of positive sectional curvature.
  Then all two dimensional faces of the GKM graph \(\Gamma_M\) of \(M\) have two or three vertices.
\end{lemma}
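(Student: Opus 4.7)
The plan is to exploit Frankel's intersection theorem for totally geodesic submanifolds of a positively curved manifold. Let \(F\) be a two-dimensional face of \(\Gamma_M\) and \(N\subset M\) the corresponding four-dimensional \(T\)-invariant submanifold. By construction there is a codimension-two subtorus \(T'\subset T\) such that \(N\) is a connected component of \(M^{T'}\) and the induced \(T/T'\)-action on \(N\) is effective with isolated fixed points corresponding to the vertices of \(F\).

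First I would note that, as a connected component of the fixed-point set of an isometric subgroup action, \(N\) is totally geodesic in \(M\) and therefore inherits an invariant metric of positive sectional curvature. Because \(M\) is of type GKM\(_3\), the two weights of the isotropy \(T\)-representation at each vertex of \(F\) that are tangent to \(N\) are linearly independent in the two-dimensional space \((\mft/\mft')^*\), so the \(T/T'\)-action on \(N\) is of GKM\(_2\) type. Since each vertex carries exactly two edges and the one-skeleton of an equivariantly formal action with isolated fixed points is connected, the graph \(F\) is a single cycle; write \(n\) for its length.

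The heart of the proof is to rule out \(n\geq 4\). Assuming \(n\geq 4\), pick two edges \(e_1,e_2\) of \(F\) with no common endpoint, and let \(S_1,S_2\subset N\) be the associated \(T\)-invariant two-spheres. Each \(S_i\) is a connected component of the fixed-point set of the circle \(\ker\alpha_{e_i}\subset T\), and so is totally geodesic in \(N\). Because the weights \(\alpha_{e_1},\alpha_{e_2}\) are linearly independent in \((\mft/\mft')^*\), the subgroups \(\ker\alpha_{e_1}\) and \(\ker\alpha_{e_2}\) together generate \(T/T'\); therefore every point of \(S_1\cap S_2\) is \(T\)-fixed. But the \(T\)-fixed points on \(S_i\) are the two endpoints of \(e_i\), which by the non-adjacency of \(e_1,e_2\) are pairwise distinct, so \(S_1\cap S_2=\emptyset\).

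The decisive input is now Frankel's theorem: in the positively curved compact four-manifold \(N\), any two closed totally geodesic submanifolds whose dimensions sum to at least \(\dim N=4\) must intersect. This contradicts \(S_1\cap S_2=\emptyset\); hence \(n\leq 3\). Combined with the obvious bound \(n\geq 2\) the lemma follows. The only slightly delicate point — and the one that genuinely uses the GKM\(_3\)-hypothesis — is the inclusion \(S_1\cap S_2\subset N^T\), which depends on the linear independence of the two tangential weights at every vertex of the face; the remaining ingredients (total geodesicity of torus-fixed components, inheritance of positive curvature, connectedness of the one-skeleton, and Frankel's classical theorem) are standard.
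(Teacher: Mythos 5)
Your argument is correct, but it proves the lemma by a genuinely different route than the paper. The paper's proof is a two-line application of a classification theorem: the face corresponds to a totally geodesic four-dimensional invariant submanifold \(N\) with an effective isometric \(T^2\)-action and positive curvature, and by the classification of such four-manifolds (\cite{MR1255926}) \(N\) is diffeomorphic to \(S^4\) or \(\C P^2\), so \(\chi(N)\le 3\) and the face has at most three vertices. You avoid the classification entirely: you observe that the face is a connected graph in which every vertex has degree two (hence a cycle), and if it had length \(\ge 4\) you could pick two vertex-disjoint edges whose invariant two-spheres \(S_1,S_2\subset N\) are totally geodesic and, by the \(3\)-independence of the weights, satisfy \(S_1\cap S_2\subset N^T\), hence are disjoint --- contradicting Frankel's theorem since \(\dim S_1+\dim S_2=\dim N\). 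All the ingredients you invoke (equivariant formality and hence connectedness of the GKM graph of the fixed-point component \(N\), total geodesicity and positive curvature of fixed-point components, exactly two tangential weights at each vertex of the face) are standard and correctly used; the only cosmetic slip is calling \(\ker\alpha_{e_i}\subset T\) a circle, when it is a codimension-one subtorus (its image in \(T/T'\) is a circle, which is what your argument actually needs). What your approach buys is independence from the Hsiang--Kleiner/Grove--Searle type classification, at the cost of a little extra combinatorics; note, however, that the paper uses that classification again right after this lemma, to identify triangular faces with linear actions on \(\C P^2\) and deduce the relation \(\gamma=\pm\alpha\pm\beta\) for the third edge weight, so your argument replaces it only for this particular statement, not for the proof of the main theorem as a whole.
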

\begin{proof}
  A two-dimensional face of \(\Gamma_M\) is the GKM graph of a four-dimensional invariant submanifold \(N\) of \(M\) on which a two-dimensional torus acts effectively.
  \(N\) is a fixed point component of the action of some subtorus \(T'\subset T\).
  Therefore it is totally geodesic in \(M\) and the induced metric has positive sectional curvature.
  The classification results of four-dimensional \(T^2\)-manifolds with positive sectional curvature given in \cite{MR1255926} imply that \(N\) is diffeomorphic to \(S^4\) or \(\C P^2\). As the Euler characteristic of these manifolds is at most $3$, the claim follows.
\end{proof}

Let \(M\) be a positively curved GKM$_3$-manifold and \(N\subset M\) a four-dimensional invariant submanifold corresponding to a two-dimensional face of \(\Gamma_M\) which is a triangle.
Then \(N\) is equivariantly diffeomorphic to \(\C P^2\) equipped with one of the actions described in Section~\ref{subsec:CP^n}
\cite[Theorem 4.3]{orlik-raymond}, \cite[Theorem 2]{melvin}.
Let \(\alpha\) and \(\beta\) denote the weights at two of the three edges in \(\Gamma_N\).
Note that these weights are determined up to sign.
Then the weight at the third edge of \(\Gamma_N=\Gamma_{\C P^2}\) is given by \(\pm \alpha\pm \beta\).

\begin{lemma}
\label{sec:main-results-1}
  An orientable GKM$_4$-manifold with an invariant metric of positive sectional curvature has the same GKM graph as a torus action on \(S^{2n}\) or \(\C P^n\).
\end{lemma}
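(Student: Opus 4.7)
The plan is to reduce everything, via Lemma \ref{sec:main-results}, to a purely combinatorial problem on the GKM graph $\Gamma_M$: every two-dimensional face is either a bigon (two vertices, corresponding to an $S^4$-face) or a triangle (three vertices, corresponding to a $\C P^2$-face). I will split according to whether $\Gamma_M$ has $N=2$ or $N\geq 3$ vertices. The $N=2$ case will be immediate, because connectedness of $\Gamma_M$ forces all edges at either vertex to end at the other one, reproducing the graph of the sphere actions described in Section \ref{subsec:spheres}, with the labels four-wise linearly independent by GKM$_4$.

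The heart of the argument is the case $N\geq 3$. The first goal is to rule out bigon faces entirely. Suppose for contradiction that two edges at a vertex $p$, with weights $\gamma_1,\gamma_2$, bound a bigon with the other endpoint $q$. Since $\Gamma_M$ is connected and has at least three vertices, $p$ has a further edge, of weight $\gamma_3$, ending at some $r\neq q$. The two-dimensional faces through the pairs $(\gamma_1,\gamma_3)$ and $(\gamma_2,\gamma_3)$ cannot be bigons, since their edges end at different vertices, hence are triangles; in each the third vertex must be $r$. The description of the weights in a $\C P^2$-face recalled just before the statement produces two distinct edges from $q$ to $r$ whose weights, up to sign, are $\gamma_3-\gamma_1$ and $\gamma_3-\gamma_2$. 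Together with the two bigon edges from $q$ to $p$, this yields four distinct edges at $q$ whose labels satisfy $\gamma_1-\gamma_2+(\gamma_3-\gamma_1)-(\gamma_3-\gamma_2)=0$; in particular the four corresponding weights are linearly dependent, contradicting GKM$_4$.

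Once no bigon face is left, the structure of $\Gamma_M$ can be read off directly. The $n=\dim M/2$ edges at any chosen vertex $v_0$, with weights $\gamma_1,\dots,\gamma_n$, go to $n$ pairwise distinct vertices $v_1,\dots,v_n$, and each triangular face $(\gamma_i,\gamma_j)$ forces an edge between $v_i$ and $v_j$. Thus $\{v_0,\dots,v_n\}$ induces a $K_{n+1}$; a degree count shows that $v_i$ has no further neighbours, so by connectedness $\Gamma_M=K_{n+1}$. Setting $\alpha_0=0$ and $\alpha_i=\gamma_i$ and applying the $\C P^2$-rule at each triangle then forces every edge $v_iv_j$ to carry the weight $\pm(\alpha_i-\alpha_j)$, matching the description of the $\C P^n$-graph in Section \ref{subsec:CP^n}.

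The step I expect to be the main obstacle is the linear-dependence argument in the second paragraph: to apply GKM$_4$ I need the four weights at $q$ to correspond to four genuinely distinct edges, which comes down to checking that no two of $\gamma_1,\gamma_2,\gamma_3-\gamma_1,\gamma_3-\gamma_2$ coincide up to sign. This is a short consequence of GKM$_2$ for comparisons among $\gamma_1,\gamma_2$, and of GKM$_3$ for those involving $\gamma_3$, but the sign bookkeeping needs a little care; everything else in the proof is a straightforward combinatorial count.
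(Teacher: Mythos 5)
Your combinatorial core is sound and is essentially the paper's own argument: you exclude the coexistence of a bigon face with any triangle face by producing a violation of $4$-independence, and then read off that the graph is either the two-vertex graph $\Gamma_{S^{2n}}$ or the complete graph on $n+1$ vertices. (The paper packages the exclusion as injectivity of a map $K_{12}\times K_{13}\to K_{23}$ between edge sets, while you argue directly at the vertex $q$; both boil down to the same use of GKM$_4$.) The sign bookkeeping you flag as the main obstacle is in fact harmless: whatever the signs, the four weights at $q$ all lie in the $3$-dimensional span of $\gamma_1,\gamma_2,\gamma_3$, hence are linearly dependent, which already contradicts $4$-independence; the only thing to check is that the four edges at $q$ are pairwise distinct, and for the two $q$--$r$ edges this follows from $3$-independence exactly as you indicate. (Minor point: connectedness only guarantees that one of $p,q$ has an edge leaving the bigon, so start with a ``without loss of generality''.)

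The genuine gap is your final sentence identifying the labels in the complete-graph case. The $\C P^2$-rule at a triangle only tells you that the edge $v_iv_j$ carries a weight of the form $\pm\gamma_i\pm\gamma_j$, not $\pm(\gamma_i-\gamma_j)$; since each $\gamma_i$ is itself only defined up to sign, the assertion that the labeling matches the one in Section \ref{subsec:CP^n} amounts to showing that the signs can be chosen coherently for all pairs simultaneously. This does not follow from the rule applied to the triangles through $v_0$ alone: for instance $\gamma_{12}=\gamma_1+\gamma_2$, $\gamma_{13}=\gamma_1-\gamma_3$, $\gamma_{23}=\gamma_2-\gamma_3$ is compatible with those constraints but is not of $\C P^n$-type for any sign flips. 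What rules such labelings out is a comparison of the two expressions for $\gamma_{ij}$ coming from two different triangles (one through $v_0$, one through $v_1$, say) together with $3$-independence; this is precisely the content of the paper's Lemma \ref{lem:weightsCP^n}, which your proposal replaces by an unproved assertion. The argument is short, but it is a separate step and must be supplied, since ``same GKM graph'' in the statement includes the labeling by weights.
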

\begin{proof} For three distinct vertices $v_1,v_2,v_3$ in $\Gamma_M$ we denote by $K_{ij}$ the set of edges between $v_i$ and $v_j$. Then each pair \((e_1,e_2)\in K_{12}\times K_{13}\) spans a two-dimensional face of \(\Gamma_M\). By Lemma \ref{sec:main-results}, the vertices of this face are exactly $v_1$, $v_2$ and $v_3$. We denote the unique edge contained in this face which connects $v_2$ and $v_3$ by $\phi(e_1,e_2)$; in this way we obtain a map $\phi:K_{12}\times K_{13}\to K_{23}$. 

  If \(\alpha_1,\alpha_2\) are the weights at \(e_1\) and \(e_2\), respectively, then the weight at \(\phi(e_1,e_2)\) is given by \(\pm \alpha_1\pm \alpha_2\).
  Therefore if \((e_1',e_2')\in K_{12}\times K_{13}\) is another pair with \(\phi(e_1',e_2')=\phi(e_1,e_2)\), then we get the relation:
  \begin{equation*}
    \pm \alpha_1\pm \alpha_2=\pm \alpha_1'\pm \alpha_2'.
  \end{equation*}
  This contradicts the $4$-independence of the weights at \(v_1\). We have thus shown that $\phi$ is injective, i.e., \(\# K_{12}\cdot \# K_{13} \leq \# K_{23}\). But this relation holds also for any permutation of $v_1,v_2$ and $v_3$, hence if one \(\# K_{ij}>1\), the other two \(K_{ij}\) must be empty. It follows that no edge of $\Gamma_M$ is contained at the same time in a two-dimensional face with three vertices and another two-dimensional face with two vertices.

  Now the GKM-graph of a GKM$_3$-manifold, all of whose two-dimensional
  faces are biangles, is $\Gamma_{S^{2n}}$, the graph with two vertices and \(n\) edges.
  The GKM-graph of a GKM$_3$-manifold, all of whose two-dimensional faces are triangles, is the complete graph $\Gamma_{\C P^{n}}$ on \(n+1\) vertices.

As argued in Subsection \ref{subsec:spheres}, any labeling of $\Gamma_{S^{2n}}$ with pairwise linearly independent weights is realized by a torus action on $S^{2n}$. For $\Gamma_{\C P^n}$, the statement that the weights are those of a torus action on $\C P^n$ will be shown in Lemma \ref{lem:weightsCP^n} below (for later use already for GKM$_3$-actions).
\end{proof}

\begin{lemma}\label{lem:weightsCP^n} Consider a GKM$_3$-manifold $M$ with the GKM graph $\Gamma_{\C P^n}$. Then the induced labeling of the GKM graph is the same as that of a torus action on $\C P^n$.
\end{lemma}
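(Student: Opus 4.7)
The plan is to fix the vertex $v_0$, denote by $\gamma_i \in \mft^*$ the weight of the edge $v_0v_i$ for $i=1,\ldots,n$ (with an arbitrarily chosen sign), and produce signs $s_i \in \{\pm 1\}$ so that setting $\alpha_0 := 0$ and $\alpha_i := s_i\gamma_i$ reproduces the target labeling $w_{ij} = \pm(\alpha_i - \alpha_j)$ on every edge of $\Gamma_M = \Gamma_{\C P^n}$. On the edges $v_0v_i$ this is automatic, so the content lies in matching the edges $v_iv_j$ with $i,j \geq 1$.

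The first step will be to show that for any three vertices $v_a, v_b, v_c$ the triangle they span is a two-dimensional face of $\Gamma_M$, and hence corresponds to a $T$-invariant $\C P^2 \subset M$. The $2$-face determined by the pair of edges $v_av_b, v_av_c$ at $v_a$ contains all three vertices and, since $\Gamma_M$ has no multi-edges, cannot be a biangle; its underlying graph is therefore a cycle $C_k$ with $k \geq 3$. One rules out $k \geq 4$ using the $3$-independence of the weights together with the completeness of $\Gamma_M$ (in the positively curved setting in which this lemma is applied the step is painless, being immediate from Lemma \ref{sec:main-results}). The corresponding four-dimensional invariant submanifold is then $\C P^2$ by Orlik-Raymond, and the weight relation of Subsection \ref{subsec:CP^n} applies: on the triangle $v_0v_iv_j$ it yields $w_{ij} = \epsilon_{ij}\gamma_i + \delta_{ij}\gamma_j$ for some $\epsilon_{ij}, \delta_{ij} \in \{\pm 1\}$, well-defined up to a simultaneous sign flip, so $\tau_{ij} := \epsilon_{ij}\delta_{ij} \in \{\pm 1\}$ is honestly well-defined.

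Matching the desired form $\pm(s_i\gamma_i - s_j\gamma_j)$ with $w_{ij}$ is then equivalent to the sign equations $s_is_j = -\tau_{ij}$, which admit a simultaneous solution $(s_1,\ldots,s_n) \in \{\pm 1\}^n$ if and only if the cocycle identity $\tau_{ij}\tau_{jk}\tau_{ik} = -1$ holds for every triple. To verify this I would apply the same $\C P^2$-relation to the triangle $v_iv_jv_k$: writing $w_{jk} = \sigma_1 w_{ij} + \sigma_2 w_{ik}$ at $v_i$ and demanding that the $\gamma_i$-coefficient vanish forces $\sigma_1\sigma_2 = -\epsilon_{ij}\epsilon_{ik}$, after which the resulting expression is $\sigma_1\delta_{ij}\gamma_j + \sigma_2\delta_{ik}\gamma_k$. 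Comparing with $w_{jk} = \epsilon_{jk}\gamma_j + \delta_{jk}\gamma_k$ and multiplying the two matching equations yields $\tau_{jk} = -\tau_{ij}\tau_{ik}$, which is precisely the required cocycle identity. With the $s_i$ in hand, $\alpha_i := s_i\gamma_i$ realizes the labeling of a torus action on $\C P^n$.

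The main obstacle is the first step: while it is immediate in the positively curved setting, verifying under pure GKM$_3$-hypotheses that every $2$-face of $\Gamma_{\C P^n}$ is a triangle requires a careful combinatorial argument that exploits the interplay between the $3$-independence of the weights at each vertex and the constraints imposed on the chord weights by the completeness of $\Gamma_M$.
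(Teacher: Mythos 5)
Your argument is correct and rests on the same two ingredients as the paper's proof: the fact that a triangular two-dimensional face corresponds to a linear action on $\C P^2$, so that its third weight is $\pm\alpha\pm\beta$, and $3$-independence of the weights at a vertex to pin down the ambiguous signs. The difference is only in the bookkeeping. You encode the ambiguity in $\tau_{ij}=\epsilon_{ij}\delta_{ij}$ and reduce the claim to solving $s_is_j=-\tau_{ij}$, verifying the triangle condition $\tau_{ij}\tau_{jk}\tau_{ik}=-1$ by comparing the two expressions for $w_{jk}$ coming from the faces $v_0v_jv_k$ and $v_iv_jv_k$, with $3$-independence applied at $v_0$; this computation is right (the overall sign ambiguity of $w_{jk}$ cancels in $\tau_{jk}$). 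The paper instead normalizes at the vertex $v_1$, choosing signs so that $\gamma_{1i}=\gamma_{01}-\gamma_{0i}$, and then compares the two expressions for $\gamma_{ij}$ coming from the faces $v_0v_iv_j$ and $v_1v_iv_j$, using $3$-independence at $v_1$; this produces the signs outright and makes the separate solvability (cocycle) step unnecessary. So the two routes are essentially the same argument with a different choice of anchor, the paper's being slightly more economical.

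Concerning your ``main obstacle'': the paper does not supply the combinatorial argument you are worried about either. Its proof simply invokes the relations $\gamma_{ij}=\pm\gamma_{0i}\pm\gamma_{0j}$, i.e.\ it tacitly uses that every two-dimensional face of $\Gamma_{\C P^n}$ is a triangle whose associated four-manifold is $\C P^2$; this is justified exactly as you say in the positively curved setting (Lemma \ref{sec:main-results} excludes faces with more than three vertices, the absence of double edges excludes biangles, and Orlik--Raymond/Melvin identify the face with $\C P^2$), which is the only setting in which Lemma \ref{lem:weightsCP^n} is applied. So your hedge matches the actual scope of the statement; under the literal GKM$_3$-only hypotheses one would indeed have to add an argument (or hypothesis) that the faces are triangles, but no such argument is attempted, or needed, in the paper.
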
 
\begin{proof} Let $v_0,\ldots,v_n$ be the vertices of the GKM graph, and $\gamma_{ij}$ the weight of the edge between $v_i$ and $v_j$ (which is determined up to sign). By the considerations in Subsection \ref{subsec:CP^n} we only have to show that we can choose the signs in such a way that $\gamma_{ij}=\gamma_{0i}-\gamma_{0j}$. We can assume without loss of generality that
\[
\gamma_{1i} = \gamma_{01} - \gamma_{0i}
\]
for all $i$, and that
\[
\gamma_{ij} = \gamma_{0i} \pm \gamma_{0j}.
\]
This implies
\[
\gamma_{ij} = \gamma_{0i}\pm \gamma_{0j} = \gamma_{01} - \gamma_{1i} \pm (\gamma_{01} - \gamma_{1j}),
\]
but on the other hand $\gamma_{ij}$ is also a linear combination of $\gamma_{1i}$ and $\gamma_{1j}$. Hence, the $3$-independence shows that $\gamma_{ij} = \gamma_{0i}-\gamma_{0j}$.
\end{proof} 

As explained in Section~\ref{sec:main-results-7}, the first part of Theorem~\ref{sec:main-results-8} follows from Lemma \ref{sec:main-results-1}. The second part will follow from the following sequence of lemmas.

\begin{lemma}
\label{sec:main-results-2}
  Let \(M\) be an orientable GKM$_3$-manifold with an invariant metric of positive sectional curvature.
  Then \(\Gamma_M\) is equal to a graph which is constructed from a simplex by replacing each edge by $k$ edges, \(k\in \N\).
\end{lemma}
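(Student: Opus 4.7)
The plan is to establish two things: (a) any two distinct vertices of $\Gamma_M$ are joined by at least one edge, so that the underlying simple graph is a complete graph (i.e.\ the $1$-skeleton of a simplex); and (b) the number of edges joining any two distinct vertices is a global constant $k\in\N$.

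For (a), I first note that $\Gamma_M$ is connected, because by the GKM description of equivariant cohomology in degree zero the dimension of $H^0(M;\R)$ equals the number of connected components of $\Gamma_M$, and $M$ is connected. The key implication is: if $v_1,v_2,v_3$ are three distinct vertices with $v_1\sim v_2$ and $v_1\sim v_3$, then $v_2\sim v_3$. Indeed, picking $e_1\in K_{12}$ and $e_2\in K_{13}$, the unique two-dimensional face spanned by $(e_1,e_2)$ has $2$ or $3$ vertices by Lemma~\ref{sec:main-results}, but already contains the three distinct vertices $v_1,v_2,v_3$, so it must be a triangle whose third edge lies in $K_{23}$. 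Chaining this implication inductively along a simple path in the connected graph $\Gamma_M$ yields that any two distinct vertices are adjacent.

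For (b), fix three distinct vertices $v_1,v_2,v_3$ and let $T$ denote the set of triangular two-dimensional faces of $\Gamma_M$ with vertex set $\{v_1,v_2,v_3\}$. For each $i\in\{1,2,3\}$ the map $T\to K_{ij}\times K_{ik}$ (with $\{i,j,k\}=\{1,2,3\}$) sending a triangle to its pair of edges at $v_i$ is injective, because a two-dimensional face is determined by any two of its edges meeting at a common vertex, and surjective, because by Lemma~\ref{sec:main-results} any such pair of edges at $v_i$ spans a triangle with vertex set $\{v_1,v_2,v_3\}$. Consequently
\[
|T| \;=\; |K_{12}|\cdot|K_{13}| \;=\; |K_{12}|\cdot|K_{23}| \;=\; |K_{13}|\cdot|K_{23}|,
\]
and since all three cardinalities are nonzero by (a), $|K_{12}|=|K_{13}|=|K_{23}|$. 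Applying this to two triples $(v_1,v_2,v_3)$ and $(v_1,v_2,v_4)$ sharing the pair $(v_1,v_2)$ then shows that the common value is independent of the chosen pair of vertices and hence a single constant $k\in\N$. The main point requiring care is the bijectivity of the projections $T\to K_{ij}\times K_{ik}$; this reduces cleanly to the uniqueness of the two-dimensional face spanned by two edges at a vertex, combined with Lemma~\ref{sec:main-results} to identify the relevant face as a triangle.
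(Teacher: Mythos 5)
Your argument is correct and follows essentially the same route as the paper: the key input in both is that two edges emanating from a common vertex span a unique two-dimensional face, which by Lemma~\ref{sec:main-results} is a biangle or a triangle; you extract \(\# K_{12}=\# K_{13}=\# K_{23}\) by counting the triangles on \(\{v_1,v_2,v_3\}\) in three ways, while the paper fixes \(f\in K_{13}\) and exhibits an injection \(K_{12}\rightarrow K_{23}\), \(e\mapsto\phi(e,f)\) -- two phrasings of the same idea. Your explicit treatment of connectivity of \(\Gamma_M\), completeness of the underlying simple graph, and constancy of \(k\) across different triples spells out reductions the paper leaves implicit, and is fine (only the trivial case of a graph with at most two vertices, where the statement holds with \(k\) the number of edges, is left unmentioned).
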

\begin{remark}
Note that all the GKM graphs listed in Section \ref{sec:rankoneactions} arise in this way. The graph of $S^{2n}$, for example, is obtained from the $1$-simplex.
\end{remark}
\begin{proof}
  Assume that \(\Gamma_M\) has at least three vertices, i.e., is not a single point and not \(\Gamma_{S^{2n}}\).
  Then the statement of the lemma follows as soon as we can show that for every choice of vertices \(v_1,v_2,v_3\) in a two-dimensional face of \(\Gamma_M\) we have \(\# K_{12}=\# K_{13}=\# K_{23}\).

  But for that it is sufficient to show that \(\# K_{12}\leq \# K_{23}\), by the symmetry of the statement. For \(f\in K_{13}\), consider the map \(\psi: K_{12}\rightarrow K_{23}\), \(e_2\mapsto \phi(e_2,f)\), where $\phi:K_{12}\times K_{13}\to K_{23}$ is the map defined in Lemma \ref{sec:main-results-1}, sending two edges to the unique third edge in the two-dimensional face spanned by them.  
 Now if \(\psi(e_1)= \psi(e_2)\), then \((\psi(e_1),f)\) and \((\psi(e_2),f)\) span the same triangle in \(\Gamma_M\).
  Therefore we must have \(e_1=e_2\) in this case, i.e., $\psi$ is injective, which shows that \(\# K_{12}\leq \# K_{23}\).
\end{proof}

The following lemma implies Corollary~\ref{sec:main-results-9}.

\begin{lemma}
\label{sec:main-results-3}
  A GKM$_3$-manifold with an invariant metric of positive sectional curvature and an invariant almost complex structure has the same GKM graph as \(\C P^n\).
\end{lemma}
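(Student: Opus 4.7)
The plan is to exploit the almost complex structure to eliminate $S^4$ as a possibility for the four-dimensional invariant submanifolds arising in Lemma~\ref{sec:main-results}. First I would observe that since the $T$-invariant almost complex structure $J$ commutes with every element of $T$, it commutes in particular with any subtorus $T'\subset T$. Hence for a connected component $N$ of a fixed set $M^{T'}$, the tangent space $T_pN=(T_pM)^{T'}$ is a $J$-stable subspace of $T_pM$ and $J$ restricts to a $T$-invariant almost complex structure on $N$. (As a byproduct $M$ is orientable, so the orientability hypothesis of Lemmas~\ref{sec:main-results} and \ref{sec:main-results-2} is automatic.) Applying this to the four-dimensional invariant submanifolds corresponding to two-faces of $\Gamma_M$ — which by Lemma~\ref{sec:main-results} are diffeomorphic to $S^4$ or $\C P^2$ — and using the classical fact that $S^4$ admits no almost complex structure at all, I can conclude that every two-dimensional face of $\Gamma_M$ has three vertices.

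Next I would combine this with Lemma~\ref{sec:main-results-2}, according to which $\Gamma_M$ is obtained from a simplex on $n+1$ vertices by replacing each edge by $k$ parallel copies for some common $k\in\N$. If $k\geq 2$, then any two of the parallel edges between a fixed pair of vertices span a two-face with only two vertices (a biangle, corresponding to an invariant $S^4$), contradicting the previous paragraph. Hence $k=1$, and $\Gamma_M$ is the complete graph on $n+1$ vertices, i.e.\ the underlying graph of $\Gamma_{\C P^n}$. A separate corner case is when the simplex has only two vertices: then the same biangle argument forces $n=1$, and $\Gamma_M=\Gamma_{\C P^1}$.

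Finally, I would invoke Lemma~\ref{lem:weightsCP^n} to upgrade this identification of the underlying graph to an identification of labeled GKM graphs: with the graph being complete and the action being GKM$_3$, the weights are forced to be those of a torus action on $\C P^n$, so $\Gamma_M$ coincides with the GKM graph of such an action.

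The only substantive point is the reduction in the first paragraph — that $J$ descends to each four-dimensional stratum — and this is essentially immediate from $J$-linearity of the torus action together with the non-existence of an almost complex structure on $S^4$. I do not expect any genuine obstacle beyond invoking these inputs in the right order.
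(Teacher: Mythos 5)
Your argument is correct, but it is not the route the paper's proof takes; it is, however, precisely the alternative argument recorded in the remark immediately following that proof (attributed there to a referee). The paper's own proof stays entirely inside GKM combinatorics: the invariant almost complex structure gives each weight a preferred sign, so the GKM graph can be decorated with signed labels satisfying $\alpha(\bar e)=-\alpha(e)$ and the compatibility condition $\alpha(e_1)=\alpha(e_2) \bmod \alpha(e)$ along an edge $e$ of a four-dimensional face; for a biangle these two conditions are incompatible, so no two-dimensional face can be a biangle, and the conclusion then follows from Lemma~\ref{sec:main-results-2} and Lemma~\ref{lem:weightsCP^n} exactly as in your last two paragraphs. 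Your version replaces this sign bookkeeping by the geometric observation that $J$ commutes with the torus action, hence restricts to an almost complex structure on every fixed-point component of a subtorus, in particular on the four-dimensional submanifolds realizing the two-faces, which by Lemma~\ref{sec:main-results} are $S^4$ or $\C P^2$; since $S^4$ carries no almost complex structure, biangles are excluded. Both arguments are sound; the paper's is self-contained within the weighted-graph formalism (and illustrates how the almost complex structure is visible in the graph itself), while yours is shorter and more conceptual at the price of importing the classical nonexistence of an almost complex structure on $S^4$. Your handling of the remaining steps (orientability being automatic, exclusion of $k\geq 2$ in the simplex description, the two-vertex corner case, and the appeal to Lemma~\ref{lem:weightsCP^n} for the labeling) matches the paper's logic.
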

\begin{proof}
  If there is an almost complex structure on a GKM-manifold \(N\), one can modify the definition of the GKM graph so that it contains information about the complex structure (see for example \cite[Section 1]{MR1919427}).
  In this case the weights of the \(T\)-representations \(T_xN\), \(x\in N^T\), have a preferred sign.
  We label the oriented edges emanating from \(x\in \Gamma_N\) by the weights of the representation \(T_xM\).
  For an oriented edge \(e\) denote by \(\alpha(e)\) the weight at \(e\).
  Then we have \(\alpha(\bar{e})=-\alpha(e)\), where \(\bar{e}\) denotes \(e\) equipped with the inverse orientation.

  Now assume that \(N\) is four-dimensional and let \(v_1,v_2\) be two vertices in \(\Gamma_N\) joined by an edge \(e\), denote by \(e_1\) and \(e_2\) the other oriented edges emanating from \(v_1\) and \(v_2\), then we must have \(\alpha(e_1)=\alpha(e_2)\mod \alpha(e)\).

  From these two properties we get a contradiction if we assume that \(\Gamma_N\) has only two vertices and two edges.

  Therefore there is no biangle in \(\Gamma_M\).
  Now, by Lemma~\ref{sec:main-results-2}, the claim follows.
\end{proof}

\begin{remark}
  As pointed out by one of the referees, one can also prove the above
  lemma as follows.
  If \(M\) admits an invariant almost complex structure, then all
  components of \(M^{T'}\) for any subtorus \(T'\subset T\) admit almost
  complex structures.
  A two-dimensional face \(F\) of \(\Gamma_M\) is the GKM graph of such a
  fixed point component \(N\).
  \(F\) is a biangle if and only if \(N\) is diffeomorphic to
  \(S^4\).
  But \(S^4\) does not admit any almost complex structure.
  Hence, there are no biangles in \(\Gamma_M\).
  Therefore the lemma follows as in the above proof.
\end{remark}

\begin{lemma}
\label{sec:main-results-4}
  Let \(M\) be an orientable GKM$_3$-manifold with an invariant metric of positive sectional curvature.
  Then \(\Gamma_M\) is equal to \(\Gamma_{S^{2n}}\) or a graph which is constructed from a simplex by replacing each edge by \(k\) edges, \(k=1,2,4\).
\end{lemma}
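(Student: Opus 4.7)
The plan is to reduce to a three-vertex totally geodesic submanifold $N$ and then derive the restriction on $k$ from the arithmetic of Pontryagin numbers of $N$.

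By Lemma~\ref{sec:main-results-2}, $\Gamma_M$ is obtained from an $(m-1)$-simplex by replacing each edge with $k$ parallel edges; the case $m\leq 2$ gives $\Gamma_M=\Gamma_{S^{2k}}$, so I assume $m\geq 3$. Fix three vertices $v_1,v_2,v_3$ and let $T'\subset T$ be the subtorus given as the simultaneous kernel of the $2k$ weights at $v_1$ labeling edges in $K_{12}\cup K_{13}$. The connected component $N$ of $M^{T'}$ containing $v_1$ is a $T$-invariant totally geodesic (hence positively curved) submanifold. By the $\C P^2$-face weight relation $\gamma_{ij}=\gamma_{0i}-\gamma_{0j}$ of Lemma~\ref{lem:weightsCP^n}, each weight of an edge in $K_{23}$ lies in the span of the weights of $K_{12}\cup K_{13}$; thus $v_2,v_3\in N$ and the GKM graph of $N$ coincides with the $k$-multi-edge graph on $\{v_1,v_2,v_3\}$. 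Generically (in particular whenever $\dim T\geq 4$) $N$ contains no further $v_j$, so $\dim N=4k$, $\chi(N)=3$, and by equivariant formality together with Poincar\'e duality $H^*(N;\R)\cong\R[x]/(x^3)$ with $|x|=2k$.

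The restriction on $k$ now follows from characteristic-number arithmetic on $N$. Since $p_i(N)\in H^{4i}(N;\R)$ vanishes unless $4i\in\{2k,4k\}$, only $p_{k/2}$ (when $k$ is even) and $p_k$ can be non-zero. For odd $k\geq 3$, only $p_k$ survives, and Hirzebruch's signature theorem yields $\sigma(N)=s_k\,\langle p_k,[N]\rangle=1$, where $s_k=\frac{2^{2k}(2^{2k-1}-1)|B_{2k}|}{(2k)!}$ is the coefficient of $p_k$ in the Hirzebruch $L$-polynomial. Integrality of the Pontryagin number $\langle p_k,[N]\rangle$ then forces $1/s_k\in\Z$, which already fails at $k=3$ (where $1/s_3=945/62$) and for all larger odd $k$ by elementary denominator estimates for Bernoulli numbers. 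For even $k\geq 6$ one supplements this with the vanishing $\hat A(N)=0$ on positively curved manifolds admitting a non-trivial $S^1$-action (Atiyah-Hirzebruch), yielding a second linear equation in the two Pontryagin numbers $\langle p_k,[N]\rangle$ and $\langle p_{k/2}^2,[N]\rangle$ whose only integer solutions occur for $k=2$ and $k=4$. The surviving values $k\in\{1,2,4\}$ realize the GKM graphs of $\C P^{m-1}$, $\HH P^{m-1}$, and $\OO P^2$ respectively.

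The hardest step is the even-$k$ case for $k\geq 6$, where Hirzebruch's signature formula alone is insufficient and one must combine $\hat A$-vanishing with further rigidity results (elliptic genera, Hirzebruch's $\chi_y$-genus) to exhaust the Diophantine possibilities. A secondary subtlety is the reduction step itself: the submanifold $N$ may coincide with $M$ when $\dim T=3$ and $m\geq 4$, in which case the characteristic-number argument must be applied directly to $M$ after replacing the three-vertex cohomology ring with the truncated polynomial structure $\R[y]/(y^m)$ that the GKM description of $H^*(M;\R)$ produces.
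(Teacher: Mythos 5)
Your approach is genuinely different from the paper's, but it has two gaps that I do not see how to close. First, the reduction step fails: GKM$_3$ only guarantees $\dim T\geq 3$, and if $\dim T=3$ and $k\geq 2$, the $3$-independence of the weights forces the subtorus $T'$ (the common kernel of the $2k$ weights at $v_1$ along $K_{12}\cup K_{13}$) to be trivial, so $N=M$ and no $4k$-dimensional ``projective-plane-like'' submanifold is produced. Even when $\dim T\geq 4$, the assertion that ``generically'' $N$ contains no further vertices is not an argument -- the action is given, not chosen, and a weight at $v_1$ pointing to a fourth vertex may well vanish on $T'$ (3-independence only rules out its lying in the span of \emph{two} of the chosen weights). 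Your fallback of running the characteristic-number argument on $M$ itself with $H^*(M;\R)\cong\R[y]/(y^{m+1})$ is a different Diophantine problem that you do not carry out.

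Second, the endgame does not close even for the genuine three-vertex case. The Atiyah--Hirzebruch vanishing theorem requires a \emph{spin} manifold (not positive curvature), and $N$ need not be spin (already $\C P^2$ is not), so $\hat A(N)=0$ is unjustified. For odd $k$ your integrality argument does work (via von Staudt--Clausen one gets $\nu_2(1/s_k)=1-s_2(2k)<0$ unless $k$ is a power of two), but for even $k$ the claim that signature plus $\hat A$-vanishing ``exhausts the Diophantine possibilities'' is asserted, not proved; in fact smooth closed manifolds with rational cohomology $\R[x]/(x^3)$, $|x|=16$, do exist in dimension $32$ (Su), so rational characteristic-number arithmetic alone cannot exclude $k=8$ -- some input from the torus action beyond an $S^1$-symmetry is indispensable. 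This is precisely where the paper's proof differs: it never leaves the GKM graph. Writing $\gamma_i=\alpha_1-\beta_i=\epsilon_{ij}\alpha_j+\delta_{ij}\beta_{\sigma_j(i)}$ and using only $3$-independence, it shows (Sublemma~\ref{sec:gkm-graphs-posit}) that the permutations $\sigma_j$ are commuting fixed-point-free involutions, whence $k$ is a power of two, and then the sign analysis of Sublemmas~\ref{sec:gkm-graphs-posit-1} and~\ref{sec:gkm-graphs-posit-2} forces $k\leq 4$. If you want to salvage your route, you would at least need to (i) handle the case $T'$ trivial, (ii) establish spin or replace $\hat A$ by an invariant available without it, and (iii) give the actual integrality computation excluding all even $k\notin\{2,4\}$, including the powers of two where it is known to be impossible by rational characteristic numbers alone.
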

\begin{proof}
  Assume that \(\Gamma_M\) is not \(\Gamma_{S^{2n}}\).
  Then there is a triangle  \(v_1,v_2,v_3\) in \(\Gamma_M\).
  Denote by \(\alpha_i\) the weights of the edges between \(v_1\) and \(v_2\),
  by \(\beta_i\) the weights of the edges between \(v_1\) and \(v_3\),
  by \(\gamma_i\) the weights of the edges between \(v_2\) and \(v_3\).

  Since the weights are only determined up to sign we may assume that
  \begin{equation}
\label{eq:1}
    \gamma_i=\alpha_1-\beta_i=\epsilon_{ij} \alpha_j +\delta_{ij} \beta_{\sigma_j(i)}
  \end{equation}
  with \(\epsilon_{ij},\delta_{ij}\in \{\pm 1\}\) and permutations \(\sigma_j\), where \(\sigma_1=\id\).
    
\begin{sublemma}
\label{sec:gkm-graphs-posit}
  For $i\neq j$ the permutations \(\sigma_j^{-1}\circ \sigma_i\) are all of order two. Moreover, they do not have fixed points; in other words, for each $l$, the map $j\mapsto \sigma_j(l)$ is a permutation as well.
\end{sublemma}
\begin{proof}[{Proof of Sublemma \ref{sec:gkm-graphs-posit}}]
Fix $i\neq j$; we consider the permutation $\sigma_j^{-1}\circ \sigma_i$ and want to show that it is of order $2$ without fixed points. 

To show that it has no fixed points assume $\sigma_i(l) = \sigma_j(l)$ for some $l$. Then  \eqref{eq:1} applied twice gives 
\[
\epsilon_{li}\alpha_i + \delta_{li}\beta_{\sigma_i(l)} = \gamma_l = \epsilon_{lj}\alpha_j + \delta_{lj} \beta_{\sigma_j(l)},
\] a contradiction to the $3$-independence.
Therefore there are no fixed points. 

Fix a number $A$, and let $B=\sigma_j^{-1}\circ\sigma_i(A)$, i.e. \(\sigma_i(A) = \sigma_j(B)\).
We have
\begin{equation}
\label{eq:9}
\gamma_A = \epsilon_{Ai}\alpha_i + \delta_{Ai}\beta_{\sigma_i(A)} = \epsilon_{Aj}\alpha_j + \delta_{Aj}\beta_{\sigma_j(A)}
\end{equation}
and
\begin{equation}
\gamma_B = \epsilon_{Bi}\alpha_i + \delta_{Bi}\beta_{\sigma_i(B)}=\epsilon_{Bj}\alpha_j + \delta_{Bj}\beta_{\sigma_j(B)}.
\end{equation}
Then the term $\epsilon_{Bi}\gamma_A - \epsilon_{Ai}\gamma_B$ is equal to the following two expressions:
\begin{equation}\label{eq:t:11}
\begin{split}
\epsilon_{Bi}\gamma_A - \epsilon_{Ai}\gamma_B&=\epsilon_{Bi} \delta_{Ai}\beta_{\sigma_i(A)} -\epsilon_{Ai}\delta_{Bi}\beta_{\sigma_i(B)}\\ &= (\epsilon_{Bi}\epsilon_{Aj}-\epsilon_{Ai}\epsilon_{Bj})\alpha_j + \epsilon_{Bi}\delta_{Aj}\beta_{\sigma_j(A)}-\epsilon_{Ai}\delta_{Bj}\beta_{\sigma_j(B)}.
\end{split}
\end{equation}

Assume that
\(\epsilon_{Bi}\epsilon_{Aj}=-\epsilon_{Ai}\epsilon_{Bj}\).
Then $\epsilon_{Bi}\gamma_A+ \epsilon_{Ai}\gamma_B$ is equal to the following:
\begin{equation}\label{eq:q:11}
\epsilon_{Bi}\gamma_A+ \epsilon_{Ai}\gamma_B = \epsilon_{Bi}\delta_{Aj}\beta_{\sigma_j(A)}+\epsilon_{Ai}\delta_{Bj}\beta_{\sigma_j(B)}.
\end{equation}

By adding and subtracting equations (\ref{eq:q:11}) and (\ref{eq:t:11}) we get
\begin{equation}
  \label{eq:q:12}
  2\epsilon_{Bi}\gamma_A= \epsilon_{Bi}\delta_{Aj}\beta_{\sigma_j(A)}+\epsilon_{Ai}\delta_{Bj}\beta_{\sigma_j(B)}+\epsilon_{Bi} \delta_{Ai}\beta_{\sigma_i(A)} -\epsilon_{Ai}\delta_{Bi}\beta_{\sigma_i(B)}
\end{equation}
and
\begin{equation}
  \label{eq:q:13}
  2\epsilon_{Ai}\gamma_B= \epsilon_{Bi}\delta_{Aj}\beta_{\sigma_j(A)}+\epsilon_{Ai}\delta_{Bj}\beta_{\sigma_j(B)}-\epsilon_{Bi} \delta_{Ai}\beta_{\sigma_i(A)} +\epsilon_{Ai}\delta_{Bi}\beta_{\sigma_i(B)}.
\end{equation}

Since \(\beta_{\sigma_i(A)}=\beta_{\sigma_j(B)}\), \(\gamma_A\) or \(\gamma_B\) is a linear combination of \(\beta_{\sigma_j(A)}\) and \(\beta_{\sigma_i(B)}\). This gives a contradiction to the \(3\)-independence of the weights.
Hence our assumption is wrong; and we have \(\epsilon_{Bi}\epsilon_{Aj}=\epsilon_{Ai}\epsilon_{Bj}\).
Therefore it follows from equation (\ref{eq:t:11}) and the \(3\)-independence that \(\sigma_j(A)=\sigma_i(B)\).
This shows that \(\sigma_j^{-1}\circ \sigma_i\) has order two.
\end{proof}

 In particular, the sublemma shows that all the \(\sigma_j=\sigma_1^{-1}\circ \sigma_j\), for $j\neq 1$, are of order two. Again by the sublemma, they also commute because
 \[
 \sigma_j\circ \sigma_i = \sigma_j^{-1}\circ \sigma_i = (\sigma_j^{-1}\circ \sigma_i)^{-1} = \sigma_i^{-1}\circ \sigma_j = \sigma_i \circ \sigma_j.
 \]
  Let \(G\) be the subgroup of the permutation group generated by the \(\sigma_j\).
  Then we have an epimorphism \(\mathbb{Z}_2^{k-1}\rightarrow G\).
  Since the \(\sigma_j^{-1}\circ \sigma_i\) do not have fixed points, \(G\) acts transitively on \(\{1,\dots,k\}\).
  Therefore we have \(\{1,\dots,k\}\cong \mathbb{Z}_2^{k-1}/H\) for a subgroup \(H\) of \(\mathbb{Z}_2^{k-1}\). In particular \(k\) is a power of two.

Let \(i\in \{1,\dots,k\}\).

\begin{sublemma}
\label{sec:gkm-graphs-posit-1}
   For \(j>1\) we have $\delta_{ij}=1$ and $\epsilon_{\sigma_j(i)j}=\epsilon_{ij}$, i.e., 
\begin{align*}
  \gamma_i&=\epsilon_{ij} \alpha_j + \beta_{\sigma_j(i)}& \gamma_{\sigma_j(i)}&=\epsilon_{ij} \alpha_j + \beta_{i}.
\end{align*}
\end{sublemma}

\begin{proof}[{Proof of Sublemma \ref{sec:gkm-graphs-posit-1}}]
  From the above relation (\ref{eq:1}) it follows that
\begin{align*}
  \alpha_1&=\gamma_i + \beta_i=\epsilon_{ij} \alpha_j +\delta_{ij} \beta_{\sigma_j(i)} + \beta_i\\
  \alpha_1&=\gamma_{\sigma_j(i)} + \beta_{\sigma_j(i)}=\epsilon_{\sigma_j(i)j} \alpha_j +\delta_{\sigma_j(i)j} \beta_{i} + \beta_{\sigma_j(i)}.
\end{align*}
Now the three-independence of the weights implies that we have $\delta_{ij}=1$ and \(\epsilon_{\sigma_j(i)j}=\epsilon_{ij}\).
\end{proof}

\begin{sublemma}
\label{sec:gkm-graphs-posit-2} For $j>2$ we have
$\epsilon_{\sigma_j(i)2}=-\epsilon_{i2}$.
\end{sublemma}

\begin{proof}[{Proof of Sublemma \ref{sec:gkm-graphs-posit-2}}]
  From Sublemma \ref{sec:gkm-graphs-posit-1} it follows that
\begin{align*}
  \epsilon_{i2}\alpha_2&=\gamma_{\sigma_2(i)} - \beta_i=\epsilon_{\sigma_2(i)j} \alpha_j + \beta_{\sigma_j\circ\sigma_2(i)} - \beta_i\\
  \epsilon_{\sigma_j(i)2} \alpha_2&=\gamma_{\sigma_j(i)} - \beta_{\sigma_j\circ\sigma_2(i)}=\epsilon_{ij} \alpha_j + \beta_{i} - \beta_{\sigma_j\circ\sigma_2(i)}.
\end{align*}
Now the three-independence of the weights implies that \(\epsilon_{i2}=-\epsilon_{\sigma_j(i)2}\).
\end{proof}

Because we know that $k$ is a power of two, if $k>2$, then $k\geq 4$, so we may choose \(j>j'>2\). 
Then, by applying Sublemma~\ref{sec:gkm-graphs-posit-2} twice, we have
\[
\epsilon_{\sigma_j(\sigma_{j'}(i))2}=-\epsilon_{\sigma_{j'}(i)2}=\epsilon_{i2}.
\]
Since, by Sublemma \ref{sec:gkm-graphs-posit}, there is a \(j''\) such that \(\sigma_{j''}(i)=\sigma_j\circ\sigma_{j'}(i)\) and $\sigma_j\circ\sigma_{j'}$ does not have fixed points, it follows, again by Sublemma \ref{sec:gkm-graphs-posit-2}, that $j''=2$.  Because this holds for each \(i\), it follows that \(\sigma_{2}=\sigma_j\circ\sigma_{j'}\).
 Hence it follows that \(k\leq 4\).
 This proves the lemma.
\end{proof}

\begin{lemma}
\label{sec:main-results-5}
  If we have \(k=4\) in the situation of Lemma~\ref{sec:main-results-4}, then \(\Gamma_M\) is a triangle with each edge replaced by four edges.
\end{lemma}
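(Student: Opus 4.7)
Suppose for contradiction that $\Gamma_M$ has at least four vertices, and fix four distinct ones $v_1,v_2,v_3,v_4$. At $v_1$, label the weights of the four edges going to $v_2,v_3,v_4$ by $\alpha_1,\dots,\alpha_4$, $\beta_1,\dots,\beta_4$, $\delta_1,\dots,\delta_4$, respectively. Both triangles $v_1v_2v_3$ and $v_1v_2v_4$ are of the type treated in Lemma~\ref{sec:main-results-4} and share the set of $\alpha$-edges at $v_1$, so the plan is to run the analysis of Sublemmas~\ref{sec:gkm-graphs-posit-1} and~\ref{sec:gkm-graphs-posit-2} for both triangles based at $v_1$ with a fixed labelling and signs on the $\alpha_i$, and then play the two resulting Klein-four structures against each other.

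From the $v_1v_2v_3$ analysis, after relabelling the $\beta_j$ and making the normalising sign choices of Lemma~\ref{sec:main-results-4}, a short computation gives $\beta_k=\tfrac{1}{2}\sum_i s^{(k)}_i\alpha_i$, with sign patterns $s^{(k)}\in\{\pm1\}^4$ equal to the four elements of the standard Klein-four
\[
V_{\mathrm{std}}=\{(+,+,+,+),\,(+,+,-,-),\,(+,-,+,-),\,(+,-,-,+)\}.
\]
Running the same analysis on $v_1v_2v_4$ while keeping the signs of the $\alpha_i$ fixed gives analogous formulas $\delta_k=\tfrac{1}{2}\sum_i t^{(k)}_i\alpha_i$. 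The Klein-four structure built into the Sublemmas forces the four classes $[t^{(k)}]\in\{\pm1\}^4/\{\pm1\}$ to form a coset of $V_{\mathrm{std}}$, and since $V_{\mathrm{std}}$ has index two in $\{\pm1\}^4/\{\pm1\}$ this coset equals either $V_{\mathrm{std}}$ or its complement $V_{\mathrm{std}}^c$.

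If the two cosets coincide, some $\beta_k=\pm\delta_{k'}$, violating $2$-independence of weights at $v_1$. Otherwise, every class of $V_{\mathrm{std}}^c$ has $\{\pm1\}^4/\{\pm1\}$-Hamming distance one from every class of $V_{\mathrm{std}}$ (representatives can always be chosen to differ in a single coordinate $j_0\in\{1,2,3,4\}$), so for any $k$ there exists $k'$ with $\beta_k-\delta_{k'}=\pm\alpha_{j_0}$ for some $j_0$; this is a linear dependence among the three weights $\alpha_{j_0},\beta_k,\delta_{k'}$ at $v_1$, contradicting GKM$_3$. Either case yields a contradiction, so $\Gamma_M$ has at most three vertices. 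The main technical point requiring care is justifying the coset assertion for the $\{t^{(k)}\}$: the Sublemma-normalisation for $v_1v_2v_4$ would normally fix its own signs on the $\alpha_i$, and the discrepancy with the $v_1v_2v_3$ normalisation is a global sign vector $u\in\{\pm1\}^4$, so that $u\cdot V_{\mathrm{std}}$ is indeed a coset of $V_{\mathrm{std}}$ which collapses modulo $\{\pm1\}$ to one of the two possibilities.
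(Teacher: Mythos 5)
Your argument is correct, but it takes a genuinely different route from the paper's proof. The ``short computation'' you invoke is precisely the content of the paper's last lemma in Section \ref{sec:main} (the determination of the labeling of $\Gamma_{\OO P^2}$): Sublemmas \ref{sec:gkm-graphs-posit}, \ref{sec:gkm-graphs-posit-1} and \ref{sec:gkm-graphs-posit-2} force, inside any $k=4$ triangle, the Hadamard-type half-sum relations between the two families of weights at a common vertex; since that lemma nowhere uses Lemma \ref{sec:main-results-5}, using its computation here is not circular, and it does show that the sign-pattern classes fill out one of the two cosets of the image of your $V_{\mathrm{std}}$ in $\{\pm1\}^4/\{\pm1\}$ (distinctness of the four classes follows from $2$-independence of the $\beta_k$). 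Two imprecisions in your write-up are harmless: in the natural normalization the patterns of the $\beta_k$ lie in the odd coset rather than in $V_{\mathrm{std}}$ itself, and the discrepancy between the normalizations chosen for the two triangles $v_1v_2v_3$ and $v_1v_2v_4$ is a signed permutation of the $\alpha_i$, not merely a sign vector $u$; but both cosets are invariant under coordinate permutations and a single sign flip either preserves or exchanges them, so your dichotomy survives --- equal cosets force some $\beta_k=\pm\delta_{k'}$ (contradicting $2$-independence), while distinct cosets give representatives differing in exactly one coordinate, hence $\beta_k-\delta_{k'}=\pm\alpha_{j_0}$ (contradicting $3$-independence). The paper argues differently: assuming four vertices $v_0,\dots,v_3$, it expresses the weights on the sides $v_1v_2$, $v_1v_3$, $v_2v_3$ via Sublemma \ref{sec:gkm-graphs-posit-1} in terms of the weights at $v_0$, and sums the identity $(\alpha_1-\gamma_i)-(\beta_1-\gamma_i)=\alpha_1-\beta_1$ over an orbit of $\sigma'^{-1}\sigma''$ to force all $\delta_i=-1$, contradicting Sublemma \ref{sec:gkm-graphs-posit-2}. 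Your approach buys a cleaner structural picture --- it effectively proves the $\OO P^2$-labeling lemma first and then plays two adjacent triangles against each other by a parity argument, and it never needs the edges between $v_3$ and $v_4$ --- at the price of writing out the half-sum computation; the paper's route stays entirely within the sublemmas and avoids the full labeling structure.
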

\begin{proof}
  Assume that \(\Gamma_M\) is a higher dimensional simplex.
  Then there are four vertices \(v_0,\dots,v_3\) in \(\Gamma_M\).
  Denote by \(\alpha_i\) the weights at the edges between \(v_0\) and \(v_1\),
  by \(\beta_i\) the weights at the edges between \(v_0\) and \(v_2\),
  by \(\gamma_i\) the weights at the edges between \(v_0\) and \(v_3\).

  By Sublemma \ref{sec:gkm-graphs-posit-1} we may assume that the weights at the edges between \(v_1\) and \(v_2\)
  are given by
  \begin{equation}\label{eq:new1}
    \alpha_1-\beta_i=\pm \alpha_2 + \beta_{\sigma(i)}
  \end{equation}
and that the weights at the edges between \(v_1\) and \(v_3\)
  are given by
  \begin{equation}\label{eq:new2}
    \alpha_1-\gamma_i=\epsilon_i \alpha_2 + \gamma_{\sigma'(i)},
  \end{equation}
for some permutations \(\sigma,\sigma'\) and \(\epsilon_i\in \{\pm1\}\).

At first we show that we may assume that the weights at the eges between \(v_2\) and \(v_3\) are given by \(\beta_1-\gamma_i\).
Since these weights are determined only up to sign we may assume that there are \(c_j\in \{\pm 1\}\) such that these weights are given by
\begin{equation*}
  \beta_1+c_j\gamma_j.
\end{equation*}
Hence, for each \(j=1,\dots,4\) there are \(a,b\in \{\pm 1\}\) and \(i\in \{1,\dots,4\}\) such that
\begin{equation}
  \label{eq:6}
  \alpha_1-\beta_1=a(\alpha_1-\gamma_i)+b(\beta_1+c_j\gamma_j).
\end{equation}

At first assume that \(i\neq j\).
By 3-independence at \(v_0\) we have \(a=-1\) and \(b=1\).
Then we have
\begin{equation*}
  (\alpha_1-\gamma_j)-(\beta_1-\gamma_j)=-(\alpha_1-\gamma_i)+(\beta_1+c_j\gamma_j).
\end{equation*}
We get a contradiction to 3-independence at \(v_3\) if \(c_j=-1\). Hence, we must have \(c_j=1\). By Sublemma~\ref{sec:gkm-graphs-posit-1}
there is a \(k\in \{1,\dots,4\}\) such that
\begin{equation*}
  \alpha_1-\gamma_i=\pm\alpha_k+\gamma_j.
\end{equation*}
Therefore we get
\begin{equation*}
  2\alpha_1-2\beta_1=\gamma_i+\gamma_j=\alpha_1\mp\alpha_k.
\end{equation*}
This is a contradiction to the 3-independence at \(v_0\).

Therefore we have \(i=j\). In this case it follows from 3-independence at \(v_0\) and Equation~(\ref{eq:6}) that \(a=1\) and \(b=c_j=-1\).
Therefore, by Sublemma~\ref{sec:gkm-graphs-posit-1}, the weights at the edges between \(v_2\) and \(v_3\)
  are given by
\begin{equation} \label{eq:new3}
    \beta_1-\gamma_i=\delta_i \beta_2 + \gamma_{\sigma''(i)},
  \end{equation}
where \(\sigma''\) is a permutation and \(\delta_i\in \{\pm1\}\). Note that by Sublemma \ref{sec:gkm-graphs-posit-2} not all $\delta_i$ are equal.

We may assume that \(\sigma(1)=2\). Then we have, using Equations \eqref{eq:new1}, \eqref{eq:6}, \eqref{eq:new2} and \eqref{eq:new3}, that
\begin{align*}
  \pm \alpha_2+\beta_2=\alpha_1-\beta_1&=(\alpha_1-\gamma_i)-(\beta_1-\gamma_i)\\
&=(\epsilon_i\alpha_2+ \gamma_{\sigma'(i)})-(\delta_i \beta_2 + \gamma_{\sigma''(i)}).
\end{align*}
Let \(m\) be the order of \(\sigma'^{-1}\sigma''\). Then we have:
\begin{multline*}
  m(\pm\alpha_2+\beta_2)=\sum_{l=0}^{m-1}((\epsilon_{(\sigma'^{-1}\sigma'')^l(i)}\alpha_2+\gamma_{\sigma'(\sigma'^{-1}\sigma'')^l(i)})\\ -(\delta_{(\sigma'^{-1}\sigma'')^l(i)}\beta_2+\gamma_{\sigma''(\sigma'^{-1}\sigma'')^l(i)}))\\
  =\sum_{l=0}^{m-1}(\epsilon_{(\sigma'^{-1}\sigma'')^l(i)}\alpha_2-\delta_{(\sigma'^{-1}\sigma'')^l(i)}\beta_2).
\end{multline*}
Therefore it follows that \(\delta_i=-1\) for all \(i\), a contradiction to the fact that not all $\delta_i$ are equal.
\end{proof}

We have shown that the GKM graph of an isometric GKM$_3$-action on a positively curved manifold is either $\Gamma_{S^{2n}}$, $\Gamma_{\C P^n}$, $\Gamma_{\HH P^n}$ or $\Gamma_{\OO P^2}$. To complete the proof of Theorem \ref{sec:main-results-8} we need to show that the induced labeling of the GKM graph is one of those described in Section \ref{sec:rankoneactions}. For $\Gamma_{S^{2n}}$ there is nothing to show, and for $\Gamma_{\C P^n}$ we already proved this in Lemma \ref{lem:weightsCP^n}. Let us consider now $\Gamma_{\HH P^n}$.

\begin{lemma}
\label{sec:main-results-10} Consider an orientable GKM$_3$-manifold $M$ with GKM graph $\Gamma_{\HH P^n}$. Then the induced labeling of the GKM graph is the same as that of a torus action on \(\mathbb{H} P^n\). 
\end{lemma}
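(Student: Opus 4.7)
The plan is to work at a fixed vertex $v_0$ and apply Sublemma~\ref{sec:gkm-graphs-posit-1} to show that the two weights on the edges from $v_0$ pair up with a common sum, from which the characters $\alpha_0, \ldots, \alpha_n$ of the $\HH P^n$ labeling can be extracted. The case $n = 1$ is trivial: $\Gamma_{\HH P^1}$ has two vertices and two edges, and any two 2-independent weights can be written as $\alpha_0 \pm \alpha_1$ by taking half their sum and half their difference. So assume $n \geq 2$.

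At $v_0$, write $\alpha^j_1, \alpha^j_2$ for the two weights on the edges to $v_j$, for $j = 1, \ldots, n$. For each pair $j \neq k$, applying Sublemma~\ref{sec:gkm-graphs-posit-1} to the triangle $v_0, v_j, v_k$ (in the present two-edges-per-pair setting of $\Gamma_{\HH P^n}$) yields, after suitable normalization of the weights between $v_j$ and $v_k$,
\[
\alpha^j_1 - \epsilon_{jk}\alpha^j_2 = \alpha^k_1 + \alpha^k_2 \qquad \text{for some } \epsilon_{jk} \in \{\pm 1\}.
\]
The $\HH P^n$ form at $v_0$ corresponds to $\epsilon_{jk} = -1$ for all $k$, giving the common sum $2\alpha_0^{(v_0)} := \alpha^j_1 + \alpha^j_2 = \alpha^k_1 + \alpha^k_2$ and parameter $\alpha_j^{(v_0)} := (\alpha^j_1 - \alpha^j_2)/2$. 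Flipping $\alpha^j_2$'s sign once reverses $\epsilon_{jk}$ for every $k$, so it suffices to show $\epsilon_{jk}$ is independent of $k$.

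Suppose for contradiction that $\epsilon_{jk} = -1$ while $\epsilon_{jk'} = +1$ for some other $k'$. Then $\alpha^j_1 + \alpha^j_2 = \alpha^k_1 + \alpha^k_2$ and $\alpha^j_1 - \alpha^j_2 = \alpha^{k'}_1 + \alpha^{k'}_2$. Applying Sublemma~\ref{sec:gkm-graphs-posit-1} to the triangle $v_0, v_k, v_{k'}$ gives $\alpha^k_1 - \epsilon' \alpha^k_2 = \alpha^{k'}_1 + \alpha^{k'}_2$ for some $\epsilon' \in \{\pm 1\}$. Combining these three equations yields $(1 + \epsilon')\alpha^k_2 = 2\alpha^j_2$, which forces $\alpha^j_2 = 0$ (if $\epsilon' = -1$) or $\alpha^j_2 = \alpha^k_2$ (if $\epsilon' = +1$), both contradicting 2-independence of weights at $v_0$.

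This establishes the $\HH P^n$ form at each vertex. To obtain global characters $\alpha_i := \alpha_0^{(v_i)}$, observe that on each edge $v_i$-$v_j$ the pair $\{w_1, w_2\}$ of edge weights has sum $\pm 2\alpha_0^{(v_i)}$ (from $v_i$'s side) and, after possibly flipping one sign, also $\pm 2\alpha_0^{(v_j)}$ (from $v_j$'s side), so the two local axis parameters are determined up to sign as half the sum and half the difference of $w_1, w_2$. This makes the global characters consistent. The main obstacle is the contradiction argument showing that $\epsilon_{jk}$ is constant in $k$; the remaining bookkeeping is routine.
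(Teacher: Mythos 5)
Your opening step (a common sum for the two weights in each direction at $v_0$) is the same as the first half of the paper's proof, and you correctly identify the consistency of $\epsilon_{jk}$ in $k$ as a crucial point; but your argument for it has a gap. The relation you quote from Sublemma~\ref{sec:gkm-graphs-posit-1} is a relation among the four weights at $v_0$ only, so ``normalizing the weights between $v_j$ and $v_k$'' cannot put it in the form $\alpha^j_1-\epsilon_{jk}\alpha^j_2=\alpha^k_1+\alpha^k_2$; what you are really doing is choosing the signs of $\alpha^k_1,\alpha^k_2$. That is fine for the two relations involving $v_j$, where those signs are still free, but when you then invoke the sublemma for the triangle $v_0,v_k,v_{k'}$ the signs of $\alpha^k_1,\alpha^k_2,\alpha^{k'}_1,\alpha^{k'}_2$ are already spent, and all you may assert is $\pm(\alpha^k_1\pm\alpha^k_2)=\alpha^{k'}_1\pm\alpha^{k'}_2$ with three undetermined signs. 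Your computation $(1+\epsilon')\alpha^k_2=2\alpha^j_2$ covers only the patterns where the $k'$-side is the plain sum; in particular the pattern $\alpha^k_1-\alpha^k_2=\pm(\alpha^{k'}_1-\alpha^{k'}_2)$ is untouched. It does lead to a contradiction, but by a different computation: solving it together with your first two relations gives $\{\alpha^{k'}_1,\alpha^{k'}_2\}=\{\alpha^k_1-\alpha^j_2,\ \alpha^j_1-\alpha^k_1\}$, so three weights at $v_0$ are linearly dependent, contradicting $3$-independence. This case (and the overall-sign variants) must be treated.

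The larger gap is the final globalization, which you dismiss as bookkeeping. Knowing that at every vertex the weights pair up with a common sum, and that an edge carries the same weight (up to sign) at both ends, does \emph{not} force the $\HH P^n$ labeling, because it does not exclude the ``base-to-base'' matching $\alpha_0^{(v_i)}=\pm\alpha_0^{(v_j)}$. Concretely, label $\Gamma_{\HH P^2}$ by the pairs $\{x\pm y\}$, $\{x\pm z\}$, $\{x\pm w\}$ with $x,y,z,w$ linearly independent: every vertex has the common-sum structure (all sums equal $2x$), the weights at each vertex are $3$-independent, yet this is not the labeling of any torus action on $\HH P^2$. Such a configuration is ruled out only by using the two-dimensional faces again, and that is precisely the second half of the paper's proof: each weight between $v_i$ and $v_j$ ($i,j>1$) is written via the faces at $v_0$ as $\gamma_{ij}=\gamma_{0j}+\eta_{ij}\gamma_{0i}'=\pm\gamma_{0j}'\pm\gamma_{0i}$ and $\gamma_{ij}'=-\gamma_{0j}+\eta_{ij}'\gamma_{0i}=\pm\gamma_{0j}'\pm\gamma_{0i}'$, and $3$-independence pins $\eta_{ij}=-1$, $\eta_{ij}'=1$, giving $\gamma_{ij}=\alpha_i+\alpha_j$ and $\gamma_{ij}'=\alpha_i-\alpha_j$. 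Your proof needs an argument of this kind; the half-sum/half-difference observation at the two ends of an edge is not sufficient.
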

\begin{proof}
Let \(v_0,v_1,\dots,v_m\) be the vertices of \(\Gamma_M\), and denote by $\gamma_{ij},\gamma_{ij}'$ the weights at the edges between $v_i$ and $v_j$. These weights are well-defined up to sign, and up to permutation of $\gamma_{ij}$ and $\gamma_{ij}'$. We can choose $\gamma_{0k}, \gamma_{1k}, \gamma_{1k}'$ and $\gamma_{01}'$ in such a way that for all $j>1$ we have (with Sublemma \ref{sec:gkm-graphs-posit-1} in mind)
\begin{align*}
\gamma_{1j}&=\gamma_{0j}-\gamma_{01} = \epsilon_{j} \gamma_{0j}'+\gamma_{01}' \\
\gamma_{1j}'&=\gamma_{0j}-\gamma_{01}' = \epsilon_{j} \gamma_{0j}' + \gamma_{01}
\end{align*}
for some signs $\epsilon_{j}$. Then we fix the sign of $\gamma_{0j}'$, $j>1$, such that $\epsilon_j=-1$. It follows that $\gamma_{01}+\gamma_{01}' = \gamma_{0j} + \gamma_{0j}'$ for all $j>1$, hence
\begin{equation}\label{eq:010j}
\gamma_{0i}+\gamma_{0i}' = \gamma_{0j} + \gamma_{0j}'
\end{equation}
for all $i,j>1$. We define
\[
\alpha_0 = \frac12(\gamma_{01} + \gamma_{01}'),\qquad \alpha_j = \frac12(\gamma_{0j}-\gamma_{0j}')\quad (j>1)
\]
so that
\[
\gamma_{0j} = \alpha_0 + \alpha_j\quad \text{and} \quad \gamma_{0j}' = \alpha_0-\alpha_j.
\]
We have to show that we can choose $\gamma_{ij}$ and $\gamma_{ij}'$ ($i,j>1$) in such a way that $\gamma_{ij} = \alpha_i + \alpha_j$ and $\gamma_{ij}' = \alpha_i - \alpha_j$.

For that we choose them such that
\begin{align}
\gamma_{ij} &= \gamma_{0j} + \eta_{ij} \gamma_{0i}' = \pm \gamma_{0j}' \pm \gamma_{0i} \label{eq:eta1}\\
\gamma_{ij}' &= -\gamma_{0j} + \eta_{ij}' \gamma_{0i} = \pm \gamma_{0j}' \pm \gamma_{0i}' \label{eq:eta2}
\end{align}
for some signs $\eta_{ij},\eta_{ij}'$. Subtracting \eqref{eq:010j} from \eqref{eq:eta1} would give a contradiction to $3$-independence if $\eta_{ij}=1$; hence $\eta_{ij}=-1$. Similarly, adding \eqref{eq:010j} to \eqref{eq:eta2} shows $\eta_{ij}'=1$. It thus follows (using \eqref{eq:010j} and \eqref{eq:eta1})
\[
\alpha_i + \alpha_j = \frac12(\gamma_{0i}-\gamma_{0i}' + \gamma_{0j} - \gamma_{0j}') = \gamma_{0j}-\gamma_{0i}' = \gamma_{ij}
\]
and
\[
\alpha_i-\alpha_j = \frac12(\gamma_{0i}-\gamma_{0i}' - \gamma_{0j}+\gamma_{0j}') = -\gamma_{0j}+\gamma_{0i} = \gamma_{ij}'
\]
as desired.
\end{proof}

And finally we consider $\Gamma_{\OO P^2}$.

\begin{lemma} Consider an orientable GKM$_3$-action with GKM graph $\Gamma_{\OO P^2}$. Then the induced labeling of the GKM graph is the same as that of a torus action on $\OO P^2$. 
\end{lemma}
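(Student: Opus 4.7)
My plan is to determine the twelve edge labels of $\Gamma_{\OO P^2}$ explicitly as half-integer linear combinations of four of them, and then identify the result with the standard $F_4/\Spin(9)$ labeling of Section~\ref{sec:rankoneactions} via the change of basis $e_i:=\gamma_i$. By Lemma~\ref{sec:main-results-5}, $\Gamma_M$ is a triangle on three vertices joined pairwise by four edges; label the weights on $K_{01},K_{02},K_{12}$ by $\alpha_i,\beta_i,\gamma_i$ ($i=1,\dots,4$).

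First I would read off the combinatorial structure from the proof of Lemma~\ref{sec:main-results-4}. By Sublemma~\ref{sec:gkm-graphs-posit}, the permutations $\sigma_2,\sigma_3,\sigma_4$ defined by $\gamma_i=\epsilon_{ij}\alpha_j+\beta_{\sigma_j(i)}$ are three commuting fixed-point-free involutions of $\{1,\dots,4\}$, i.e., precisely the non-identity elements of a Klein four subgroup of $S_4$. After a relabeling of $\alpha_2,\alpha_3,\alpha_4$ I may assume $\sigma_j(1)=j$, so that $\sigma_2=(12)(34)$, $\sigma_3=(13)(24)$, $\sigma_4=(14)(23)$. For the signs, Sublemma~\ref{sec:gkm-graphs-posit-1} gives $\epsilon_{\sigma_j(i)j}=\epsilon_{ij}$, and the proof of Sublemma~\ref{sec:gkm-graphs-posit-2} works verbatim with an arbitrary $l>1$ in place of $2$, yielding $\epsilon_{\sigma_j(i)l}=-\epsilon_{il}$ whenever $j\ne l$ are both $>1$. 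After flipping signs of the $\alpha_j$ ($j>1$) to normalize $\epsilon_{1j}=+1$, these two rules force $\epsilon_{ij}=+1$ exactly when $i$ lies in the $\sigma_j$-orbit $\{1,j\}$.

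Next I would solve for the $\beta$'s. The triangle identity rearranges to $\beta_i+\beta_{\sigma_j(i)}=\alpha_1-\epsilon_{ij}\alpha_j$; the two orbits of $\sigma_j$ give $\beta_1+\beta_j=\alpha_1-\alpha_j$ and the complementary pair sums to $\alpha_1+\alpha_j$, whence $\beta_1+\beta_2+\beta_3+\beta_4=2\alpha_1$. A short elimination then yields
\[
\beta_1=\tfrac12(\alpha_1-\alpha_2-\alpha_3-\alpha_4),\quad \beta_i=\alpha_1-\alpha_i-\beta_1\ (i>1),\quad \gamma_i=\alpha_1-\beta_i.
\]
Summing $\gamma_i=\alpha_1-\beta_i$ over $i$ gives $\alpha_1=\tfrac12(\gamma_1+\gamma_2+\gamma_3+\gamma_4)$; setting $e_i:=\gamma_i$, this coincides with the standard $\alpha_1=\tfrac12(e_1+e_2+e_3+e_4)$, and substitution back into the above formulas yields $\alpha_j\equiv\tfrac12(-e_1-e_j+\sum_{k\ne 1,j}e_k)$ for $j=2,3,4$, $\beta_i=\tfrac12(-e_i+\sum_{k\ne i}e_k)$, and $\gamma_i=e_i$, matching the labeling of Section~\ref{sec:rankoneactions} up to the sign ambiguity of GKM weights.

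If the $\gamma_i$ happen to be linearly dependent in $\mft^*$ (so that the effective rank of $T$ on $M$ is less than $4$), the same assignment $e_i\mapsto\gamma_i$ still defines a linear map $\R^4\to\mft^*$ corresponding to a torus homomorphism $T\to T_0$ into the maximal torus $T_0\subset F_4$, and our labeling is the pullback of the standard labeling along this map---still the labeling of a torus action on $\OO P^2$. The only real obstacle I anticipate is the sign-and-permutation bookkeeping; once Sublemma~\ref{sec:gkm-graphs-posit-2} is extended in the obvious symmetric fashion, everything reduces to the linear-algebra calculation outlined above.
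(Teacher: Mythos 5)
Your proposal is correct and takes essentially the same route as the paper's proof: both use Sublemmas \ref{sec:gkm-graphs-posit}--\ref{sec:gkm-graphs-posit-2} to normalize the permutations and signs, deduce that one family of weights sums to \(2\alpha_1\), and then solve the resulting linear system to recover the standard \(F_4/\Spin(9)\) labeling. The only cosmetic differences are that the paper takes the weights on the edge between \(v_1\) and \(v_2\) as the basis playing the role of the \(e_i\) (rather than your \(\gamma_i\)) and only needs Sublemma \ref{sec:gkm-graphs-posit-2} for \(l=2\), while your symmetric extension of that sublemma is indeed valid verbatim.
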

\begin{proof}
  Let \(v_0,v_1,v_2\) be the vertices of \(\Gamma_M\).
  Denote by \(\alpha_i\) the weights at the edges between \(v_0\) and \(v_1\), by \(\beta_i\) the weights at the edges between \(v_1\) and \(v_2\), and by \(\gamma_i\) the weights at the edges between \(v_2\) and \(v_0\).

 Then by Sublemmas~\ref{sec:gkm-graphs-posit}, \ref{sec:gkm-graphs-posit-1} and \ref{sec:gkm-graphs-posit-2}, we may assume that the following relations hold:
 \begin{align}
   \label{eq:2}\gamma_1&=\alpha_1-\beta_1=\alpha_2+\beta_2\\
   \label{eq:3}\gamma_2&=\alpha_1-\beta_2=\alpha_2+\beta_1\\
   \label{eq:4}\gamma_3&=\alpha_1-\beta_3=-\alpha_2+\beta_4\\
   \label{eq:5}\gamma_4&=\alpha_1-\beta_4=-\alpha_2+\beta_3
 \end{align}
By adding equations \eqref{eq:3} and \eqref{eq:4} we get that
\begin{equation*}
  \alpha_1=\frac{1}{2}(\beta_1+\beta_2+\beta_3+\beta_4).
\end{equation*}
Hence, the \(\gamma_i\) are of the form \(\frac{1}{2}(-\beta_i+\sum_{j\neq i} \beta_j)\). By Sublemma~\ref{sec:gkm-graphs-posit-1} we may assume that
\begin{equation*}
  \gamma_1=\alpha_1-\beta_1=\alpha_2+\beta_2=\alpha_3+\beta_3=\alpha_4+\beta_4.
\end{equation*}
Thus, for \(i>1\),
\begin{equation*}
  \alpha_i=\frac{1}{2}(-\beta_1-\beta_i+\sum_{j\neq i, 1} \beta_j).
\end{equation*}
This proves the lemma.
\end{proof}

\section{Integer coefficients}\label{sec:integer-coefficients}

In this section we prove a version of our main theorem for integer coefficients. 
To do so we have to generalize some of the results from Section \ref{sec:gkm-theory} to integer coefficients.

The two ingredients we need for our theorem to hold are that the ordinary cohomology is encoded in the equivariant cohomology, i.e., that
\begin{equation}\label{eq:surjinteger}
H^*_T(M;\mathbb{Z})\longrightarrow H^*(M;\mathbb{Z})
\end{equation}
is surjective, and that the equivariant cohomology algebra is encoded in the combinatorics of the one-skeleton $M_1$, i.e., that a Chang-Skjelbred Lemma holds. This can be seen to be true only under additional assumptions on the isotropy groups of the action, see \cite[Corollary 2.2]{FranzPuppe}: if for all $p\notin M_1$ the isotropy group $T_p$ is contained in a proper subtorus of $T$, and $H^*_T(M;\mathbb{Z})$ is a free module over $H^*(BT)$, then
\[
0\longrightarrow H^*_T(M;\mathbb{Z}) \longrightarrow H^*_T(M^T;\mathbb{Z})\longrightarrow H^*_T(M_1,M^T;\mathbb{Z})
\]
is exact.

Because the fixed point set is always finite and \(M\) is orientable
in our situation, freeness of $H^*_T(M;\mathbb{Z})$ is equivalent to
$H^{\odd}(M;\mathbb{Z})=0$ by \cite[Lemma 2.1]{MasudaPanov}. Moreover,
under the assumption that $H^{\odd}(M;\mathbb{Z})=0$,
\eqref{eq:surjinteger} is surjective: this follows either from the
proof of \cite[Lemma 2.1]{MasudaPanov} or via the fact that in this
situation the Leray spectral sequence collapses.

Moreover, since \(H_T^*(M^T;\mathbb{Z})\rightarrow
H_T^*(M^T;\mathbb{R})\) is injective the formula (\ref{eq:7}) also
holds for the equivariant (integer) Pontryagin classes of \(M\). 

Denote by \(\mathbb{Z}_{\mathfrak{t}}^*\subset \mathfrak{t}^*\) the weight lattice of the torus \(T\).
Then we call two weights \(\alpha,\beta\in \mathbb{Z}_{\mathfrak{t}}^*\) coprime if there are primitive elements of \(\mathbb{Z}_{\mathfrak{t}}^*\), \(\alpha',\beta'\), and \(a,b\in\mathbb{Z}\) such that \(\alpha=a\alpha'\) and \(\beta=b\beta'\) and \(a\) and \(b\) are coprime.
Note that, if \(\alpha\neq 0\), \(a\) and \(\alpha'\) are uniquely determined up to sign by \(\alpha\).

\begin{lemma}
\label{lem:integer-coefficients}
 If, for an orientable GKM manifold with vanishing odd-degree integer cohomology, at each fixed point any two weights are coprime, then the Chang-Skjelbred Lemma holds for integer coefficients.
\end{lemma}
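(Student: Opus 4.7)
The plan is to apply \cite[Corollary 2.2]{FranzPuppe}, which reduces the Chang--Skjelbred conclusion with integer coefficients to two hypotheses: (i) $H^*_T(M;\mathbb{Z})$ is free over $H^*(BT)$, and (ii) for every $p\notin M_1$ the isotropy group $T_p$ is contained in a proper subtorus of $T$. Hypothesis (i) is already furnished in the preceding discussion from $H^{\odd}(M;\mathbb{Z})=0$ and \cite[Lemma 2.1]{MasudaPanov}, so the real content of the lemma is extracting the proper subtorus in (ii) from the coprimality of weights.

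The core of the argument is a purely algebraic claim: for any two coprime weights $\alpha,\beta\in\mathbb{Z}_\mathfrak{t}^*$, the sublattice $L:=\mathbb{Z}\alpha+\mathbb{Z}\beta$ contains a primitive element of $\mathbb{Z}_\mathfrak{t}^*$. I would prove this by Smith normal form. Writing $\alpha=a\alpha'$ and $\beta=b\beta'$ with $\alpha',\beta'$ primitive and $\gcd(a,b)=1$, and choosing a $\mathbb{Z}$-basis of the saturation $\bar L$ of $L$ in $\mathbb{Z}_\mathfrak{t}^*$, primitivity of $\alpha',\beta'$ in $\mathbb{Z}_\mathfrak{t}^*$ coincides with primitivity in the saturated sublattice $\bar L$, hence their coordinate vectors have content $1$. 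Consequently, the gcd of all entries in the generator matrix of $L$ reduces to $\gcd(a,b)=1$, the first elementary divisor of $L\hookrightarrow\bar L$ is $1$, $\bar L/L$ is cyclic, and $L$ contains a basis vector of $\bar L$, which is automatically primitive in $\mathbb{Z}_\mathfrak{t}^*$ because $\bar L$ is saturated. In particular, any subgroup $H\subseteq\ker\alpha\cap\ker\beta$ is contained in the proper subtorus $\ker\gamma$ for the primitive element $\gamma=m\alpha+n\beta\in L$.

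To apply this geometrically, fix $p\notin M_1$. Since $\dim T\cdot p\geq 2$, the identity component $T_p^0$ is a proper subtorus of codimension at least two. By equivariant formality passing to fixed-point sets of subtori, the component $N_0$ of $M^{T_p^0}$ through $p$ contains a $T$-fixed point $p_0$, and $T_{p_0}N_0=\bigoplus_{i\in A}V_{\alpha_i}$ with $A=\{i:T_p^0\subseteq\ker\alpha_i\}$; the bound $\dim N_0\geq 2$ forces $|A|\geq 2$. Any two indices $i,j\in A$ supply coprime weights $\alpha_i,\alpha_j$ at $p_0$ annihilating $T_p^0$, and the algebraic claim produces a primitive $\gamma\in\mathbb{Z}\alpha_i+\mathbb{Z}\alpha_j$ with $T_p^0\subseteq\ker\gamma$.

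The main obstacle I expect is upgrading $T_p^0\subseteq\ker\gamma$ to $T_p\subseteq\ker\gamma$, i.e.\ handling the finite part $T_p/T_p^0$. The clean reformulation is that $T_p$ lies in a proper subtorus if and only if the annihilator $T_p^\perp$ contains a primitive element, equivalently $T_p\not\supseteq T[\ell]$ for every prime $\ell$ (using $T[\ell]^\perp=\ell\mathbb{Z}_\mathfrak{t}^*$). Ruling out $T_p\supseteq T[\ell]$ is where coprimality is used in its full strength: such a containment would put $p$ in $M^{T[\ell]}$, and at any $T$-fixed point $p_0$ in the component of $M^{T[\ell]}$ through $p$ the $T[\ell]$-fixed subspace on $T_{p_0}M$ would have real dimension $\geq 2$ (since this component cannot coincide with the fixed-point-free orbit $T\cdot p$), forcing two weights at $p_0$ to be divisible by $\ell$ and so contradicting coprimality of any two weights at a fixed point.
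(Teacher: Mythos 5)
Your overall strategy is essentially the paper's: reduce via Franz--Puppe to showing that for every prime $\ell$ the fixed point set $M^{T[\ell]}$ of the maximal $\ell$-torsion subgroup meets only $M_1$, and then use coprimality of the weights at a $T$-fixed point of the relevant component; your lattice-theoretic reformulation ($T_p$ lies in a proper subtorus iff $T_p\not\supseteq T[\ell]$ for all primes $\ell$) is correct and is just the translation between Corollary 2.2 and Theorem 2.1 of \cite{FranzPuppe}. However, there is a genuine gap at the decisive step: in your final paragraph you argue ``at any $T$-fixed point $p_0$ in the component of $M^{T[\ell]}$ through $p$,'' but you never show that this component contains a $T$-fixed point at all. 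Your earlier appeal to equivariant formality produces $T$-fixed points only in components of $M^{T'}$ for \emph{subtori} $T'$ (such as $T_p^0$); it gives no control over the fixed set of the finite group $T[\ell]$, and real/rational equivariant formality cannot detect $M^{T[\ell]}$. This is exactly where the integral hypothesis is used in the paper: from $H^{\odd}(M;\Z)=0$ and orientability one gets $H^{\odd}(M;\Z/\ell\Z)=0$, and then iterated Smith theory \cite[Theorem VII.2.2]{Bredon} gives $H^{\odd}(M^{T[\ell]};\Z/\ell\Z)=0$, so every component of $M^{T[\ell]}$ has nonzero Euler characteristic and hence contains a $T$-fixed point. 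Without this (or an equivalent mod-$\ell$ argument) your proof does not close, since a priori the component through $p$ could be, e.g., a fixed-point-free $T$-invariant submanifold.

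A secondary slip: at $p_0$ the $T[\ell]$-fixed subspace of $T_{p_0}M$ is a sum of \emph{two}-dimensional weight spaces, so ``real dimension $\geq 2$'' only forces one weight to be divisible by $\ell$; to contradict coprimality you need dimension $\geq 4$. This is recoverable from your parenthetical: the component contains the $\geq 2$-dimensional orbit $T\cdot p$ and also $p_0$, so it cannot coincide with the compact (hence closed) orbit, whence its dimension is even and $>2$, i.e.\ $\geq 4$, giving two weights divisible by $\ell$. The claim ``$\dim N_0\geq 2$ forces $|A|\geq 2$'' in your middle paragraph has the same problem, but that paragraph is in any case superseded by the $T[\ell]$ argument.
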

\begin{proof}
  Let \(p\) be a prime and denote by \(G\) the maximal \(\mathbb{Z}/p\mathbb{Z}\)-torus in \(T\).
  By \cite[Theorem 2.1]{FranzPuppe}, it is sufficient to show that \(M^G\) is contained in \(M_1\).
  
  At first note that by an iterated application of \cite[Theorem VII.2.2, p.~376]{Bredon}, we have that \(\dim H^{\odd}(M^G;\mathbb{Z}/p\mathbb{Z})\leq \dim H^{\odd}(M;\mathbb{Z}/p\mathbb{Z})=0\).
  Therefore every component of \(M^G\) is a $T$-invariant submanifold with non-trivial Euler characteristic. As the Euler characteristic always equals the Euler characteristic of the fixed point set, it follows that in all these components there is a $T$-fixed point $x$.
  
  Now consider the \(T\)-representation \(T_xM\).
  Then \(T_x(M^G)=(T_xM)^G\) is an invariant subrepresentation and therefore a direct sum of weight spaces \(V_\alpha\).
  Let \(q\in \mathbb{Z}\) and \(\alpha'\in \mathbb{Z}_{\mathfrak{t}}^*\) a primitive element such that \(\alpha=q\alpha'\).
  Then \(V_\alpha\) is fixed by \(G\) if and only if \(p\) divides \(q\).
  Since by assumption the weights at \(x\) are coprime, it follows that \(T_xM^G\) contains at most one weight space.
  Thus, the component of \(M^G\) which contains \(x\) is contained in a two-dimensional sphere fixed by a corank-one torus \(T'\) of \(T\).
  Hence \(M^G\) is contained in \(M_1\).
\end{proof}

\begin{remark}
  As pointed out to us by one of the referees,
  Lemma~\ref{lem:integer-coefficients} was conjectured for Hamiltonian
  torus actions of GKM type on symplectic manifolds by Tolman and
  Weitsman \cite{MR1736221}.
  This conjecture has been shown in \cite{schmid01:_cohom}  by Schmid under the assumption
  that all weights of the GKM graph are primitive vectors in \(\Z_{\mathfrak{t}}^*\).
\end{remark}

\begin{theorem}
\label{sec:integer-coefficients-1}
 Let $M$ be a positively curved orientable manifold with $H^{\odd}(M;\mathbb{Z})=0$ which admits an isometric torus action with finitely many fixed points such that
\begin{itemize}
\item At each fixed point any three weights of the isotropy representation are linearly independent and
\item At each fixed point any two weights are coprime.
\end{itemize}
 Then $M$ has the integer cohomology ring of a CROSS.
 Moreover, the total Pontryagin class of \(M\) is standard, i.e. there
 is a CROSS \(K\) and an isomorphism of rings
 \(f:H^*(M;\mathbb{Z})\rightarrow H^*(K;\mathbb{Z})\) such that \(f(p(M))=p(K)\).
\end{theorem}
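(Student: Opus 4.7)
The plan is to replay the argument outlined in Section~\ref{sec:main-results-7} for integer coefficients, using the technical tools collected at the beginning of Section~\ref{sec:integer-coefficients}. The first observation is that the proofs of Lemmas~\ref{sec:main-results-1}--\ref{sec:main-results-5} together with Lemmas~\ref{lem:weightsCP^n} and~\ref{sec:main-results-10} (and the final lemma of Section~\ref{sec:main}) are purely combinatorial arguments about real linear (in)dependence of weights, and hence apply verbatim to determine that the labeled GKM graph of $M$ is isomorphic to that of some natural torus action on a CROSS $K$ from Section~\ref{sec:rankoneactions}. I would realize $K$ using precisely the characters occurring as edge labels of $\Gamma_M$; then $K$ carries the isomorphic labeled graph, its weights are pairwise coprime at every fixed point, and it has vanishing odd integer cohomology.

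Next I would upgrade this graph isomorphism to an isomorphism of $H^*(BT;\Z)$-algebras $H^*_T(M;\Z)\cong H^*_T(K;\Z)$. By Lemma~\ref{lem:integer-coefficients}, the integer Chang--Skjelbred sequence
\[
0 \to H^*_T(X;\Z) \to H^*_T(X^T;\Z) \to H^*_T(X_1,X^T;\Z)
\]
is exact for both $X=M$ and $X=K$. The boundary map factors through the individual invariant two-spheres of $X_1$, and a direct computation of $H^*_T(S^2;\Z)$ identifies the image of $H^*_T(X;\Z)\to H^*_T(X^T;\Z)$ with the subalgebra of tuples $(f_v)\in\bigoplus_v S(\mft^*_\Z)$ satisfying $f_v\equiv f_w\pmod{\alpha}$ for every edge from $v$ to $w$ labeled $\alpha$. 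Since $\Gamma_M$ and $\Gamma_K$ are isomorphic as labeled graphs, these two images coincide, yielding the desired equivariant isomorphism.

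The excerpt already records that $H^{\odd}(M;\Z)=0$ forces $H^*_T(M;\Z)\to H^*(M;\Z)$ to be surjective with kernel $(H^{>0}(BT;\Z))$, and the same holds for $K$. Passing to quotients on both sides then produces the ring isomorphism $H^*(M;\Z)\cong H^*(K;\Z)$. For the Pontryagin classes, formula~(\ref{eq:7}) computes the total equivariant integer Pontryagin class $p^T$ from the weights at the fixed points, as noted in the excerpt. Since $M$ and $K$ have identical weights at corresponding fixed points, $p^T(M)$ and $p^T(K)$ have equal images under the injective restriction to $H^*_T(X^T;\Z)$, hence correspond under the equivariant isomorphism; reducing modulo $(H^{>0}(BT;\Z))$ matches $p(M)$ with $p(K)$.

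The main obstacle I foresee lies in the equivariant step, namely rigorously verifying the edge-mod-$\alpha$ description of the image of $H^*_T(M;\Z)\to H^*_T(M^T;\Z)$ over $\Z$. The delicate point is that when an edge weight $\alpha=a\alpha'$ is not primitive, one must check the precise form of $H^*_T(S^2;\Z)$ so that the congruence modulo $\alpha$ (rather than $\alpha'$) appears; the pairwise coprimality hypothesis is essential here both to invoke Lemma~\ref{lem:integer-coefficients} and to guarantee that the integer GKM subring is genuinely determined by the labeled graph. Once this integer GKM presentation is established, the remainder of the argument parallels the real case of Section~\ref{sec:main-results-7}.
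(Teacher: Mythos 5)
Your proposal is correct and follows essentially the same route as the paper: the paper's proof simply invokes Lemma~\ref{lem:integer-coefficients} to get the integer GKM description and then repeats the real-coefficient argument of Theorem~\ref{sec:main-results-8}(2), which is exactly your plan. The extra details you supply (the Mayer--Vietoris computation of $H^*_T(S^2;\Z)$ giving the congruence modulo a possibly non-primitive weight $\alpha$, and realizing the model CROSS $K$ with the very characters labeling $\Gamma_M$) are precisely the points the paper leaves implicit, and they are handled correctly.
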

\begin{proof}
  By Lemma~\ref{lem:integer-coefficients}, we have a GKM description of the equivariant cohomology of \(M\) with integer coefficients.
  Therefore the statement follows as in the proof of the second part of Theorem~\ref{sec:main-results-8}.
\end{proof}

\begin{remark} If $M$ is simply-connected and has the integer cohomology of $\mathbb{C} P^n$, then $M$ is already homotopy equivalent to $\mathbb{C} P^n$: choose a map $f:M\to K({\mathbb{Z}},2)=\mathbb{C} P^\infty$ such that the pullback of a generator of $H^2(\mathbb{C} P^\infty;\mathbb{Z})$ generates $H^2(M;\mathbb{Z})$. This map can be deformed to a map which takes values in the $2n$-skeleton of $\mathbb{C} P^\infty$, which is $\mathbb{C} P^n$, and this deformed map is then the desired homotopy equivalence. 
\end{remark}

\begin{remark} \label{rem:rathomtop}
  By \cite[Corollary 2.7.9]{MR1236839}, a simply connected manifold which has the
  same rational cohomology as a compact rank one symmetric space is
  formal in the sense of rational homotopy theory.
  Therefore it follows from   \cite[Theorem 2.2]{kreck-triantafillou} which is a generalization of \cite[Theorem
  12.5]{MR0646078}
 that up to diffeomorphism there are only finitely many simply connected integer cohomology \(\mathbb{K} P^n\)'s, \(\mathbb{K}=\C,\mathbb{H},\mathbb{O}\), with standard Pontryagin classes of dimension greater than four.
  Thus there are only finitely many diffeomorphism types of
  simply connected GKM$_3$-manifolds as in Theorem~\ref{sec:integer-coefficients-1}.
\end{remark}

\section{Non-orientable GKM-manifolds}
\label{sec:non-oriented}

In this section we prove an extension of Theorem~\ref{sec:main-results-8} to non-orientable GKM$_3$-manifolds.

\begin{lemma}
  If \(M^{2n}\) is a non-orientable GKM$_3$-manifold with an invariant metric of positive curvature, then the GKM graph of \(M\) coincides with the GKM graph of a linear torus action on \(\R P^{2n}\), i.e., it has only a single vertex.
\end{lemma}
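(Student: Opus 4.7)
The plan is to reduce to the already established orientable case by passing to the orientation double cover \(\pi\colon\tilde M\to M\).

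First I would check that \(\tilde M\) is itself an orientable positively curved GKM\(_3\)-manifold. Orientability is by construction and the pulled-back metric has positive sectional curvature. The connected torus \(T\) acts on \(M\) by maps isotopic to the identity and hence preserves local orientations, so (after possibly replacing \(T\) by a \(2\)-fold cover, which is still a torus of the same dimension acting with the same weights up to scale) the action lifts to \(\tilde M\). The isotropy representation at any \(p\in M^T\) is orientation-preserving, since it is a representation of a connected group, so each fibre of \(\pi\) over \(M^T\) consists of two distinct \(T\)-fixed points, and the lifted weights coincide with the original ones; hence \(3\)-independence is inherited. Equivariant formality of \(\tilde M\) follows from the decomposition
\[
 H^*(\tilde M;\R)=H^*(M;\R)\oplus H^*(M;\mathcal{L})
\]
into the \(\pm\)-eigenspaces of the deck \(\Z/2\)-action (with \(\mathcal{L}\) the orientation local system of \(M\)), together with the twisted Poincar\'e duality isomorphism \(H^{\odd}(M;\mathcal{L})\cong H_{2n-\odd}(M;\R)\), whose right-hand side vanishes by \(H^{\odd}(M;\R)=0\).

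By the results of Section~\ref{sec:main}, the labelled GKM graph \(\Gamma_{\tilde M}\) must coincide with that of a torus action on one of \(S^{2n}\), \(\C P^n\), \(\HH P^n\) or \(\OO P^2\). The deck transformation \(\iota\) is a free \(T\)-equivariant isometric involution which swaps the two preimages of each \(p\in M^T\); thus the induced automorphism \(\sigma\) of \(\Gamma_{\tilde M}\) is fixed-point-free on vertices and preserves edge labels up to sign.

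I would then rule out all candidate graphs except \(\Gamma_{S^{2n}}\). The graph \(\Gamma_{\OO P^2}\) has three vertices, and \(\Gamma_{\C P^n}\), \(\Gamma_{\HH P^n}\) with \(n\) even have the odd number \(n+1\) of vertices, so none admits a fixed-point-free involution. For \(\Gamma_{\HH P^n}\) with odd \(n\geq 3\), a sign analysis of the set equality \(\{\alpha_{\sigma(i)}\pm\alpha_{\sigma(j)}\}=\{\pm(\alpha_i\pm\alpha_j)\}\), combined with the pairwise linear independence and non-vanishing of the weights \(\alpha_i\pm\alpha_j\), forces in every configuration either a fixed vertex of \(\sigma\) or the vanishing of one of these weights, a contradiction. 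For \(\Gamma_{\C P^n}\) with odd \(n\geq 3\), normalising \(\sigma(0)=1\), the only solution of \(\alpha_{\sigma(i)}-\alpha_{\sigma(j)}=\pm(\alpha_i-\alpha_j)\) consistent with \(\sigma^2=\mathrm{id}\) and freeness is \(\alpha_{\sigma(j)}=\alpha_0+\alpha_1-\alpha_j\); but then at the vertex \(v_0\) one obtains the linear relation
\[
 \alpha_0-\alpha_{\sigma(2)}=(\alpha_0-\alpha_1)-(\alpha_0-\alpha_2)
\]
among three of the weights, contradicting \(3\)-independence. Hence \(\Gamma_{\tilde M}=\Gamma_{S^{2n}}\), \(\iota\) swaps its two vertices, and \(\Gamma_M\) has a single vertex, as claimed. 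The hard part will be this last step, in particular enumerating the sign patterns in the \(\HH P^n\) case and producing the forced linear dependence at \(v_0\) in the \(\C P^n\) case; the parity arguments for \(\Gamma_{\OO P^2}\) and the even-\(n\) graphs, and the verification that \(\tilde M\) is GKM\(_3\), are comparatively routine.
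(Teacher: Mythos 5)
Your proposal is correct and reaches the conclusion, but it deviates from the paper at the decisive step. Up to that point the two arguments agree: pass to the orientation double cover, lift the action (possibly after passing to a finite cover of \(T\)), check that \(\tilde M\) is GKM\(_3\), verify \(H^{\odd}(\tilde M;\R)=0\) (you do this via the eigenspace decomposition and twisted Poincar\'e duality on \(M\); the paper equivalently uses the \(\Z_2\)-eigenspace decomposition together with Poincar\'e duality on \(\tilde M\), noting that the deck action is \(-1\) on \(H^{2n}(\tilde M;\R)\)), and then invoke Section \ref{sec:main} to conclude that \(\Gamma_{\tilde M}\) is one of the labelled CROSS graphs. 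To exclude the graphs other than \(\Gamma_{S^{2n}}\), the paper argues geometrically: if \(\Gamma_{\tilde M}\) had at least three vertices, every vertex of \(\Gamma_M\) would lie both on a two-vertex edge and on a one-vertex (\(\R P^2\)-type) edge, and the two-dimensional face spanned by such a pair would be a non-orientable positively curved four-dimensional \(T^2\)-manifold other than \(\R P^4\), contradicting the classification in \cite{MR1255926}. You instead exclude these graphs purely combinatorially, using that the deck transformation induces a vertex-free, label-preserving (up to sign) involution: parity kills \(\Gamma_{\OO P^2}\) and the even-\(n\) projective graphs, and your sketched sign analyses do close in the remaining cases — for \(\Gamma_{\HH P^n}\) every sign configuration in \(\{\pm(\alpha_{\sigma(i)}\pm\alpha_{\sigma(j)})\}=\{\pm(\alpha_i\pm\alpha_j)\}\) forces \(\alpha_a=\pm\alpha_b\) for distinct \(a,b\), i.e.\ a vanishing label, and for \(\Gamma_{\C P^n}\) the relations \(\alpha_{\sigma(i)}-\alpha_{\sigma(j)}=\epsilon_{ij}(\alpha_i-\alpha_j)\) have constant sign by \(2\)-independence, the \(+\) case giving a zero label and the \(-\) case the dependence at \(v_0\) you state, contradicting \(3\)-independence. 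The trade-off: the paper's route is shorter and uniform, at the price of appealing once more to the four-dimensional classification in its non-orientable form; your route needs no geometric input beyond Section \ref{sec:main}, but relies on the labelling rigidity of Lemmas \ref{lem:weightsCP^n} and \ref{sec:main-results-10} and requires the case-by-case sign bookkeeping that you correctly identify as the remaining work.
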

\begin{proof}
  Denote by \(\tilde{M}\) the orientable double cover of \(M\).
  Then the torus action on \(M\) lifts to a torus action on \(\tilde{M}\).
  With this lifted action \(\tilde{M}\) is a GKM$_3$-manifold:   Indeed, it is obvious that the torus action on \(\tilde{M}\) has isolated fixed points.
  Moreover, the isotropy representation at a fixed point \(x\in \tilde{M}\) is isomorphic to the isotropy representation at \(p(x)\) where  \(p:\tilde{M}\rightarrow M\) is the covering map.
  Therefore the \(3\)-independence of the weights of \(T_x\tilde{M}\) follows.
  
  Now we show that \(H^{\odd}(\tilde{M};\R)=0\) which is equivalent to equivariant formality of the torus action on \(\tilde{M}\).
  Since the torus action on \(M\) is equivariantly formal and has only finitely many fixed points, we have \(H^{\text{odd}}(M;\mathbb{R})=0\).
  \(M\) is the quotient of a free \(\mathbb{Z}_2\)-action on \(\tilde{M}\).
  Therefore, by \cite[Chapter III]{Bredon}, \(H^*(M;\R)\) is isomorphic to \(H^*(\tilde{M};\R)^{\mathbb{Z}_2}\).
  Hence, it follows that \(\mathbb{Z}_2\) acts on \(H^{\text{odd}}(\tilde{M};\mathbb{R})\) and \(H^{2n}(\tilde{M};\mathbb{R})\) by multiplication with \(-1\).
  Now assume that  \(H^{\text{odd}}(\tilde{M};\R)\neq0\).
  Then, by Poincar\'e duality, there are \(\alpha_1,\alpha_2\in H^{\text{odd}}(\tilde{M};\mathbb{R})\) such that \(0\neq \alpha_1\alpha_2 \in H^{2n}(\tilde{M};\mathbb{R})\).
  But this is a contradiction to the description of the \(\mathbb{Z}_2\)-action given above.
  Therefore we must have \(H^{\text{odd}}(\tilde{M};\R)=0\).

  The one-skeleton of the action on \(\tilde{M}\) is a double covering of the one-skeleton of the action on \(M\).
  Therefore \(\Gamma_M\) is a quotient of a \(\mathbb{Z}_2\)-action on \(\Gamma_{\tilde{M}}\) which is free on the vertices. By the results in Section \ref{sec:main}, $\Gamma_{\tilde{M}}$ is one of the graphs described in Section \ref{sec:rankoneactions}.  It is easy to see that if \(\Gamma_{\tilde{M}}\) is not \(\Gamma_{S^{2n}}\), then every vertex of \(\Gamma_M\) is contained in an edge which contains two vertices and an edge which contains only one vertex.
  But this is impossible.
  Indeed, the only non-orientable \(T^2\)-manifold in dimension four, which admits an invariant metric of positive sectional curvature is \(\R P^4\) \cite{MR1255926}.
  Therefore there is no two-dimensional face of \(\Gamma_M\) which contains an edge which connects two vertices and an edge which contains only a single vertex.
  Hence, \(\Gamma_{\tilde{M}}\) must be isomorphic to \(\Gamma_{S^{2n}}\).
  In this case there is only a single vertex in \(\Gamma_M\).
  This implies that \(\Gamma_M=\Gamma_{\R P^{2n}}\).

  Therefore the lemma is proved.
\end{proof}

From the above lemma we get immediately the following corollary.

\begin{cor}
\label{sec:non-orientable-gkm}
  Let \(M\) be a compact positively curved non-orientable Riemannian manifold with $H^{\odd}(M;\R)=0$ which admits an isometric torus action of type GKM$_3$. 
 Then \(H^*(M;\R)=H^0(M;\R)=\R\).
\end{cor}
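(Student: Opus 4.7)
The proof is essentially a direct deduction from the preceding lemma, so the plan is quite short. The lemma establishes that, under the hypotheses of the corollary, the GKM graph $\Gamma_M$ coincides with $\Gamma_{\R P^{2n}}$ and in particular consists of a single vertex. All that remains is to translate this combinatorial statement into a statement about the real cohomology of $M$.

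First I would invoke the basic principle reviewed at the end of Section~\ref{sec:gkm-theory}: for a torus action on a compact manifold with only finitely many fixed points, the vanishing of $H^{\odd}(M;\R)$ is equivalent to equivariant formality, and in this situation the total real Betti number $\sum_i \dim H^i(M;\R)$ equals the Euler characteristic of $M$, which in turn equals the number of $T$-fixed points. Both hypotheses required for this principle are satisfied here: the assumption $H^{\odd}(M;\R)=0$ is given, and the GKM$_3$ assumption forces the fixed point set to be finite.

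Since the vertices of $\Gamma_M$ are by definition the $T$-fixed points of $M$, the preceding lemma shows that $M^T$ consists of exactly one point. Combined with the principle above we obtain $\sum_i \dim H^i(M;\R) = 1$. As $M$ is compact and connected, $H^0(M;\R) = \R$ already contributes one dimension, so the only nonzero real cohomology group of $M$ is $H^0(M;\R)$, yielding $H^*(M;\R) = H^0(M;\R) = \R$ as claimed. There is no genuine obstacle here: all the substantive work is absorbed into the preceding lemma, and the corollary is really just the observation that a single-vertex GKM graph combined with equivariant formality forces trivial reduced cohomology.
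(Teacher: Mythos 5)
Your deduction is correct and is essentially the argument the paper intends when it says the corollary follows immediately from the preceding lemma: the single-vertex GKM graph means $M^T$ is a single point, and equivariant formality (equivalent to $H^{\odd}(M;\R)=0$ here) gives $\dim H^*(M;\R)=\#M^T=1$, so $H^*(M;\R)=H^0(M;\R)=\R$. No gaps.
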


\appendix
\section{GKM actions on the nonsymmetric examples}\label{sec:nonsymmetricgkm}
Apart from the compact rank one symmetric spaces, the only known examples of even-dimensional positively curved manifolds are the homogeneous spaces $\SU(3)/T^2$, $\Sp(3)/\Sp(1)^3$ and $F_4/\Spin(8)$ \cite{Wallach}, and the biquotient $\SU(3)/\!/T^2$ \cite{Eschenburg}. Because these examples do not have the rational cohomology of a compact rank one symmetric space, Theorem \ref{sec:main-results-8} implies that they do not admit an isometric action of type GKM$_3$, but we will see in this section that all of them admit an isometric action of type GKM$_2$, and we will determine their GKM graphs.

The homogeneous examples admit a GKM$_2$-action by the general results of \cite{GuilleminHolmZara}: for any homogeneous space of the form $G/H$, where $G$ and $H$ are Lie groups of equal rank, the action of a maximal torus in $H$ (or $G$) is of type GKM$_2$. Let us first determine the GKM graphs of the three homogeneous examples above.
\subsection{$\SU(3)/T^2$} This example was considered in \cite{GuilleminHolmZara}, Section 5.3, where it was shown that the GKM graph of the $T^2$-isotropy action is the bipartite graph $K_{3,3}$. 
 \begin{figure}[htb]
 \includegraphics[width=117pt]{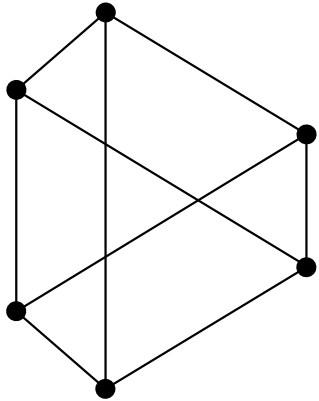}
 \caption{GKM graph of $\SU(3)/T^2$ and $\SU(3)/\!/T^2$}
 \label{figrp}
 \end{figure}
 Let us provide a slightly different argument which will turn out to be generalizable to the other two homogeneous examples. The homogeneous space $\SU(3)/T^2$ admits a homogeneous fibration
\[
S^2\longrightarrow \SU(3)/T^2\longrightarrow \C P^2,
\]
which implies that the GKM graph in question projects onto the GKM graph of $\C P^2$ (a triangle), with fibers the GKM graph of $S^2$ (a line). Now by \cite{GuilleminHolmZara}, Theorem 2.4, the vertices of the graph are in one-to-one correspondence with the elements of the Weyl group $W(\SU(3))$ of $\SU(3)$, which is the symmetric group $S_3$. Moreover, by the same theorem, two vertices are not joined by an edge if the corresponding elements $w,w'$ in the Weyl group satisfy the condition that $w^{-1}w'$ is not of order two. Thus, two vertices are not joined by an edge if they correspond to elements of $W(\SU(3))=S_3$ whose orders are congruent modulo $2$. This leaves $K_{3,3}$ as the only possibility for the graph.
\subsection{$\Sp(3)/\Sp(1)^3$} Consider the GKM$_2$-action on $\Sp(3)/\Sp(1)^3$ of a maximal torus $T^3\subset \Sp(1)^3$. This homogeneous space admits a homogeneous fibration
\[
S^4\longrightarrow \Sp(3)/\Sp(1)^3\longrightarrow \HH P^2,
\]
so the graph projects to a triangle with each edge doubled, with biangles as fibers. The vertices are in one-to-one correspondence to elements of the Weyl group quotient $W(\Sp(3))/W(\Sp(1)^3)$, and because the Weyl group of $\Sp(1)^3$, $\Z_2^3$, is normal in the Weyl group $\Z_2^3\rtimes S_3$ of $\Sp(3)$, with quotient $S_3$, similar considerations as above hold true. We conclude that the GKM graph is the bipartite graph $K_{3,3}$, with each edge doubled.
 \begin{figure}[htb]
 \includegraphics[width=117pt]{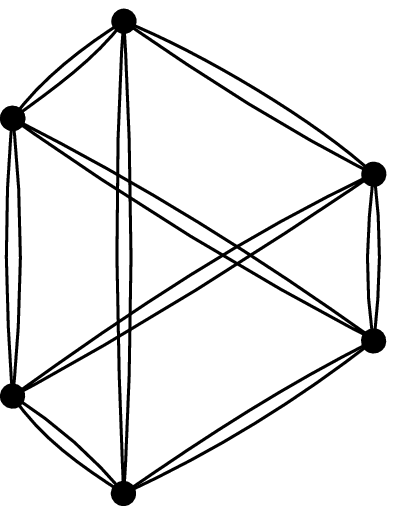}
 \caption{GKM graph of $\Sp(3)/\Sp(1)^3$}
 \label{figrp}
 \end{figure}
\subsection{$F_4/\Spin(8)$} Also this example admits a homogeneous fibration:
\[
S^8\longrightarrow F_4/\Spin(8)\longrightarrow \OO P^2
\]
which implies that the GKM graph projects onto a triangle with each edge replaced by four edges, with fibers the graph with two vertices and four edges. Again, the Weyl group of $\Spin(8)$ is normal in the Weyl group of $F_4$, with quotient $S_3$, see \cite{Adams}, Theorem 14.2. The GKM graph is therefore $K_{3,3}$, with each edge replaced by four edges.
 \begin{figure}[htb]
 \includegraphics[width=117pt]{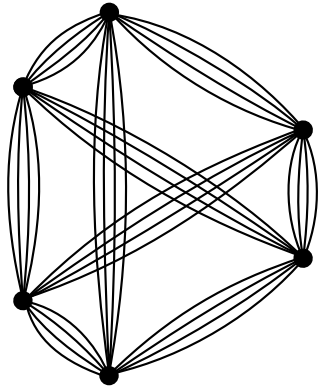}
 \caption{GKM graph of $F_4/\Spin(8)$}
 \label{figrp}
 \end{figure}
\subsection{$\SU(3)/\!/T^2$}
Finally, we consider the biquotient $\SU(3)/\!/T^2$. It is the quotient of $\SU(3)$ by the free $T^2$-action given by
\[
(t,w)\cdot g = \diag(t,w,tw)\cdot g\cdot \diag(1,1,t^{-2}w^{-2}).
\]
We denote the projection $\SU(3)\to \SU(3)/\!/T^2$ by $g\mapsto [g]$. This manifold admits an action of a two-dimensional torus by
\[
(a,b)\cdot [g] = [\diag(a,a,a^{-2})\cdot g\cdot \diag(b,b^{-1},1)]
\]
The six fixed points of the action are given by the elements $[g]$, where $g\in \SU(3)$ is a matrix that has (maybe after multiplying with \(-1\)) as column vectors a permutation of the standard basis vectors $e_1,e_2,e_3$. This action is of type GKM$_2$; two fixed points $[g_1],[g_2]$, with $g_i$ as above, are connected by a two-dimensional isotropy submanifold if and only if $g_1$ and $g_2$ have one identical column. For example, $\left[\left(\begin{matrix}
1 & 0 & 0 \\
0 & 1 & 0\\
0 & 0 & 1
\end{matrix}\right)\right]$ and $\left[-\left(\begin{matrix}
1 & 0 & 0 \\
0 & 0 & 1\\
0 & 1 & 0
\end{matrix}\right)\right]$ are joined by the projection of the $T^2$-invariant submanifold ${\mathrm{S}}(\U(1)\times \U(2))=\{\left(\begin{matrix} a & 0 & 0 \\ 0 & b & c \\0 & d & e \end{matrix}\right) \in \SU(3)\}$ of $\SU(3)$ to $\SU(3)/\!/T^2$. 

It follows that the GKM graph of $\SU(3)/\!/T^2$ is again the bipartite graph $K_{3,3}$: the vertices corresponding to the elements $\left(\begin{matrix}
1 & 0 & 0\\
0 & 1 & 0\\
0 & 0 & 1
\end{matrix}\right)$, $\left(\begin{matrix}
0 & 1 & 0\\
0 & 0 & 1\\
1 & 0 & 0
\end{matrix}\right)$ and $\left(\begin{matrix}
0 & 0 & 1\\
1 & 0 & 0\\
0 & 1 & 0
\end{matrix}\right)$ are pairwise not connected, and also not the vertices corresponding to $-\left(\begin{matrix}
1 & 0 & 0\\
0 & 0 & 1\\
0 & 1 & 0
\end{matrix}\right)$, $-\left(\begin{matrix}
0 & 1 & 0\\
1 & 0 & 0\\
0 & 0 & 1
\end{matrix}\right)$ and $-\left(\begin{matrix}
0 & 0 & 1\\
0 & 1 & 0\\
1 & 0 & 0
\end{matrix}\right)$.
\bibliography{pos_sec_gkm}{}
\bibliographystyle{amsplain}

\end{document}